\title{  The Continuity of Linear and Sublinear Correspondences Defined on  Cones\\[0.3cm]}
\author{{Masoumeh Aghajani$^1$, \,\,\,Kourosh Nourouzi$^{2}$ \thanks{ Corresponding
author } \thanks {e-mails: nourouzi@kntu.ac.ir, nourouzi@ipm.ir;
fax: +98 21 22853650}
 ,\,\,\, Donal O'Regan$^3$ }\\[0.4cm]
{ \em $^{1,2}$ Department of Mathematics,  K. N. Toosi University of Technology,}\\
{\em P.O. Box 16315-1618, Tehran, Iran.}\\
  \and
%{\em $^2$ School of Mathematics, Institute for Research in
%Fundamental Sciences (IPM),}\\
%{\em P.O.Box 19395-5746, Tehran, Iran.}
%\and
{\em $^{3}$ School of Mathematics, Statistics and Applied Mathematics,}\\
{\em
National University of Ireland, Galway, University Road, Galway, Ireland}\\
%{\em Email: donal.oregan@nuigalway.ie}\\
}
\newenvironment{proof}{\noindent {\em {Proof .}}}{$\square$

\medskip}
\newtheorem{definition}{Definition}
\newtheorem{corol}{Corollary}
\newtheorem{ex}{Example}
\newtheorem{thm}{Theorem}
\newtheorem{lem}{Lemma}
\begin{document}

\maketitle \begin{abstract} In this paper, we investigate the
continuity  of linear and sublinear correspondences defined on
 cones in normed spaces. We also generalize some known results  for sublinear correspondences.
\end{abstract} \maketitle

\renewcommand{\baselinestretch}{1.1}
\def\thefootnote{ \ }

\footnotetext{{\em} $2010$ Mathematics Subject Classification.
Primary: 47A06, Secondary: 54C60
\\
\indent {\em Key words}: Linear Correspondence, Sublinear
Correspondence, Cone.}

\section{Introduction and Preliminaries}
An investigation of linear correspondences defined on  cones in
normed spaces was given  in \cite{ol}. In particular, the
existence of  a unique iteration semigroup  of continuous linear
selections  of  an iteration semigroup of linear correspondences
defined on a cone with a finite cone basis is shown in \cite{ol}.
It is  shown in \cite{sm} that a regular cosine family consisting
of superadditive mappings continuous and homogeneous with respect
to positive rationals with compact values has exponential growth.
 The continuity of  a regular cosine family consisting of
continuous and additive mappings   with compact and convex values
defined on cones with nonempty interior in Banach spaces is
established in \cite{sm}.
 A generalization of these results in normed spaces can  be found  in \cite{kn}.

%************In this work, we reintroduce linear and sublinear
%correspondences on cones in real normed spaces and give some
%results on the automatic continuity and general form of linear and
%sublinear correspondences with convex and compact values. Then we
%state some results about invertibility of selections of sublinear
%correspondences which generalize Lemmas 2, 3 in \cite{ol}, and
%bring some results for an iteration semigroup of sublinear
%correspondences which generalize Theorem 1 in \cite{ol}.

In this paper we reintroduce linear and sublinear correspondences
on cones in real normed spaces and give some results on
continuity. A general form of linear and sublinear correspondences
with convex and compact values is given. We also present  some
results on  invertibility of selections of sublinear
correspondences and some results for an iteration semigroup of
sublinear correspondences. More precisely, the
outline of this paper is as follows.  %Lemma 1 in \cite{ol} gives a
%necessary condition for upper semicontinuity of a linear
%correspondence  but
In  Lemma \ref{hembound} we give a necessary and sufficient
condition for upper semicontinuity of a sublinear correspondence.
In Lemma \ref{lemma} we show that the inequality given in Lemma 2
of \cite{ol} can be replaced by  equality. Corollary \ref{1}
and Corollary \ref{2} show that the validity of Lemma 2 and Lemma
3 in \cite{ol} for sublinear correspondences, respectively.
Theorem \ref{thm} is a restatement of Theorem 1 in \cite{ol} for
sublinear correspondences.
%$\spadesuit3$ Linear and sublinear correspondences are interesting from a functional analytic viewpoint, since they can be regarded as a multi valued generalization of linear operators.$\spadesuit3$
%

We begin with some basic concepts which are needed in this paper.

A subset $C $ of a real normed space $X$ is a cone if
$tC\subseteq C$ for every $t>0.$ A linearly independent set $E$
is said to be a basis of cone $C$ if
$$C=\{x\in X: x=\lambda_1 e_1 +\lambda_2 e_2 +\cdots+\lambda_ne_n, n\in N , e_i \in E,\lambda_i\geq0 , i=1,\cdots,n\}.$$
Throughout this paper we assume that $X$ and $Y$ are two real
normed spaces and $C$ is a convex cone of $X$.

Let $c(X)$ denote the set of all nonempty and compact subsets of
$X$ and $cc(X)$ be the family of all convex sets of $c(X)$.

We recall that a  correspondence $\varphi$ on any subset $E$ of
$X$ is a relation which assigns a nonempty set of $Y$ to each
element of $E$. We use the notations $\varphi:C\rightarrow c(Y)$
and $\varphi:C\rightarrow cc(Y)$ for correspondences with compact
values
 and convex and compact values, respectively.
 %\doublespacing
\begin{definition}{\rm
\cite{ol}\label{sub}
 A correspondence  $\varphi:C\twoheadrightarrow
Y$
 is called:
\begin{enumerate}
\item linear if  $\varphi(x+y)=\varphi(x)+\varphi(y)$ (additivity) and
$\varphi(\lambda x)=\lambda \varphi(x)$, for every $x,y\in C$ and
$\lambda>0$; \item sublinear if $\varphi(x+y)\subseteq
\varphi(x)+\varphi(y)$ and $\varphi(\lambda x)=\lambda
\varphi(x)$, for every $x,y\in C$ and $\lambda>0.$
\end{enumerate}
}\end{definition}

 It is clear that every linear correspondence is sublinear
but the converse is not true. %Throughout this paper, by a
%sublinear and linear correspondence we mean a correspondence
%satisfying Definition \ref{sub}.
 %For example the set valued $\varphi$
%on $\mathbb{R}$ defined by $\varphi(x)=\{x,2x\}$ is sublinear while
%it is not linear.

\begin{definition} {\rm \cite{sm}  A correspondence $\varphi:C\twoheadrightarrow
Y$ is said to be bounded if for every bounded subset $E$ of $C$
the subset $\varphi(E)$ is bounded in $Y$.}
\end{definition}

%Two following lemmas are equivalence definitions of a bounded
%$\mathbb{Q}_+$-homogeneous set-valued function.

%\begin{lem} {\rm \cite{sm}} Let $C$ be convex. A set-valued function $\varphi:C\twoheadrightarrow Y$ is
%bounded if and only if there exists a positive constant $M$ such
%that
%\begin{equation}
%\label{bound} \| \varphi(x)\|:=\sup\{\| y\|:y\in \varphi(x)\}\leq M \|
%x\|\},\,\,\,\,\,\,\,\,\,\,\,\,\,\,\,\,\,\, (x\in C).
%\end{equation}
%\end{lem}
%
%\begin{lem} {\rm \cite{sm}} Let $C$ be a convex cone in
%$X$. The set-valued  $\varphi:C\twoheadrightarrow Y$ is bounded if and only if the equality $lim_{x\rightarrow 0,x\in C} \|
%\varphi(x)\|=0$ holds.
%\end{lem}

%\begin{definition} {\rm \cite{ol} Let $X$ be a nonempty set. A family $\{\varphi^t;t\geq0\}$ of
%set-valued functions $\varphi^t:X\rightarrow n(X)$ is called an
%iteration semigroup if and only if $\varphi^t\circ \varphi^s=\varphi^{t+s}$ for all
%$t,s\geq0.$ If $X$ is a metric space we say that an iteration
%semigroup $\{\varphi^t;t\geq0\}$ of set-valued functions
%$\varphi^t:X\rightarrow cc(X)$ is continuous if for every $x\in X$ the
%set-valued function $t\rightarrow \varphi^t(x)$ is continuous.}
%\end{definition}

We recall that a neighborhood of a set $A$ is any set $B$ for
which there is an open set $V$ satisfying $A\subseteq V\subseteq
B.$

%We recall that a set-valued function $\varphi:X\rightarrow Y$
%between topological spaces is: $\spadesuit$
\begin{definition}{\rm \cite{alip} A correspondence $\varphi:C\twoheadrightarrow
Y$ is said to be: \begin{enumerate} \item  upper semicontinuous at
the point $x$ if for every neighborhood $U$ of $\varphi(x),$ there
is a neighborhood $V$ of $x$ such that $z\in V$ implies
$\varphi(z)\subseteq U$. Also $\varphi$ is upper semicontinuous on
$C,$ if it is upper semicontinuous at every point of $C.$
 \item
lower semicontinuous at the point $x$ if for every open set $U$
that $\varphi(x)\cap U\neq\emptyset$ there is a neighborhood $V$
of $x$ such that $z\in V$ implies $\varphi(z)\cap U\neq\emptyset.$
 $\varphi $ is lower semicontinuous on $C,$ if it is lower
semicontinuous at every point of $C.$ \item continuous at $x$ if
it is both upper and lower semicontinuous at $x.$  It is
continuous if it is continuous at each point of $C$.
\end{enumerate}}\end{definition}

%\begin{lem} {\rm \cite{ol}} Let  $0\in C$.
%If $\clubsuit\varphi:C\twoheadrightarrow Y$ is bounded-valued and
%linear,  upper semicontinuous at zero then there exists a
%constant $M>0$ such that $$ \| \varphi(x)\|\leq M{\| x\|}$$ for
%each $x\in C.$
%\end{lem}
 For each pair of nonempty and
compact subsets $A$ and $B$ of $X,$ the Hausdorff metric
$\mathfrak{h}$ is defined as
$$\mathfrak{h}(A,B)=\max\{{\sup}_{a\in A}d(a,B),{\sup}_{b\in
B}d(b,A)\},$$
 where $d(a,B)={\inf}_{b\in B} \|a-b \|$.

 Every  correspondence with compact values  $\varphi:X\twoheadrightarrow Y$  is continuous if and only if $\varphi:X\rightarrow (c(Y), \mathfrak{h})$ is
 continuous in the sense of a single-valued function (see Theorem 17.15 in \cite{alip}).

%\begin{thm} {\rm \cite{ol}} Let $X$ be a normed space, $C\subseteq X$  a cone with a finite
%cone-basis and let $B$ be a bounded subset of $C$. If
%$\{{\varphi}^t;t\geq0\}$ is an iteration semigroup of linear
%set-valued functions ${\varphi}^t:C\rightarrow cc(C),t\geq0,$
%satisfying the conditions
%\begin {description}
%\item i) ${\varphi}^0(x)=\{x\},$ for all $x\in C$
%
%\item ii)$lim_{t\rightarrow0}\| {\varphi}^t-{\varphi}^0\|=0,$
%
%\end {description}
%then there exists $M>0$ and $W>0$ such that for each $t\geq0$ $$\|
%{\varphi}^t\|\leq Me^{wt}$$
%
%and $$\forall s_0 \geq0 \,\forall\varepsilon>0 \,\exists
%\delta>0\, \forall x\in B(\mid s-s_0\mid<\delta\Rightarrow
%h({\varphi}^{s_0}(x),{\varphi}^s(x))<\varepsilon).$$
%
%\end{thm}
%\begin{lem}{\rm \cite{sm}}\label{linuni}
%Let $X$ and $Y$ be two real normed spaces and let $h$ be the
%Hausdorff distance derived from the norm in $Y.$ Suppose that $C$
%is a convex cone in $X$ with nonempty interior. Then there exists
%a positive constant $M_0$ such that for every linear continuous
%set-valued function $\varphi:K\rightarrow c(Y)$ the inequality
%$$h(\varphi(x),\varphi(y))\leq M_0 \|\varphi\|\cdot \|x-y\|$$ holds.
%\end{lem}
\section{Continuity of Linear and Sublinear Correspondences }
In this section  we study  the continuity of linear and sublinear
correspondences  defined on cones with a finite basis in real
normed spaces. We start with the following.
\begin{lem} {\rm \cite{sm}} \label{equ} A sublinear correspondence $\varphi:C\twoheadrightarrow Y$ is
bounded if and only if there exists a positive constant $M$ such
that
\begin{equation}
\label{bound} \| \varphi(x)\|:=\sup\{\| y\|:y\in \varphi(x)\}\leq
M \| x\|,\,\,\,\,\,\,\,\,\,\,\,\,\,\,\,\,\,\, (x\in C).
\end{equation}
%\begin{equation}
%\label{bound} \| \varphi(x)\|:=\sup\{\| y\|:y\in \varphi(x)\}\leq M
%\| x\|,\,\,\,\,\,\,\,\,\,\,\,\,\,\,\,\,\,\, (x\in C).
%\end{equation}
\end{lem}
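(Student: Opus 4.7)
The plan is to prove the two implications separately. The reverse direction is essentially a definition check, whereas the forward direction reduces, via the standard normalization trick for positively homogeneous maps, to observing what happens on the unit sphere. A small subtlety concerning $\varphi(0)$ has to be dispensed with first.

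For the implication from the norm estimate to boundedness, I would fix a bounded subset $E\subseteq C$ with $\|x\|\le K$ for all $x\in E$. Then for any $y\in \varphi(x)$ with $x\in E$ the hypothesis gives $\|y\| \le M\|x\| \le MK$, so $\varphi(E)$ is bounded in $Y$ and $\varphi$ is bounded in the sense of the definition.

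For the converse, assume $\varphi$ is bounded and suppose, toward a contradiction, that no constant $M$ satisfying (\ref{bound}) exists. Then for each $n\in\mathbb N$ one can choose $x_n\in C$ with $\|\varphi(x_n)\| > n\|x_n\|$. Before normalizing I would exclude the case $x_n=0$: if $0\in C$, positive homogeneity applied with $x=0$ gives $\varphi(0)=\lambda\varphi(0)$ for every $\lambda>0$, and since $\varphi(\{0\})$ is bounded by hypothesis, the only set with this invariance property is $\{0\}$. Hence $\|\varphi(0)\|=0$, contradicting the strict inequality at $n=1$, so we may assume $x_n\ne 0$ for every $n$. Setting $y_n:=x_n/\|x_n\|$, the sequence $\{y_n\}$ lies on the unit sphere of $X$ and is therefore bounded, while positive homogeneity yields $\varphi(y_n)=\|x_n\|^{-1}\varphi(x_n)$ and thus $\|\varphi(y_n)\| = \|\varphi(x_n)\|/\|x_n\| > n$. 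Consequently $\varphi(\{y_n:n\in\mathbb N\})$ is unbounded in $Y$, contradicting the boundedness of $\varphi$ on the bounded set $\{y_n\}$.

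There is no substantial obstacle in this argument; the only delicate point is the remark about $\varphi(0)$, because the definition imposes homogeneity only for $\lambda>0$. Note also that sublinearity (as opposed to linearity) is never actually used in the argument, so no additional work is needed to handle the set-valued versus single-valued distinction beyond interpreting $\|\varphi(x)\|$ as in (\ref{bound}).
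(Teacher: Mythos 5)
Your argument is correct. Note that the paper itself gives no proof of this lemma --- it is quoted from \cite{sm} --- so there is nothing to compare against line by line; your proof is the standard normalization argument one would expect (the easy direction by direct estimation on a bounded set, the converse by contradiction via unit vectors $y_n=x_n/\|x_n\|$, which stay in $C$ because $C$ is a cone), and your care with the point $x=0$ --- using $\varphi(0)=\lambda\varphi(0)$ for all $\lambda>0$ together with boundedness of $\varphi(\{0\})$ to force $\varphi(0)=\{0\}$ --- correctly closes the only gap in the naive normalization. Your closing observation that only positive homogeneity (not subadditivity) is used is also accurate.
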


%\begin{lem} {\rm \cite{sm}} Let $C$ be a convex cone in
%$X$. The set-valued  $\varphi:C\twoheadrightarrow Y$ is bounded if and only if the equality $lim_{x\rightarrow 0,x\in C} \|
%\varphi(x)\|=0$ holds.
%\end{lem}
%Lemma \ref{equ} of \cite{ol} for sublinear correspondences by
%proposition\ref{hembound}.
Lemma 1 in \cite{ol} gives a necessary condition for upper
semicontinuity of a linear correspondence.
\begin{lem}\label{hembound} Let  $0\in C\subseteq X$. If $\varphi:C\twoheadrightarrow Y$
is a bounded-valued sublinear correspondence,  then $\varphi$ is
upper semicontinuous at zero if and only if $\varphi$ is bounded.
\end{lem}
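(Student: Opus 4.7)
The plan is to use Lemma \ref{equ} to convert boundedness of $\varphi$ into the Lipschitz-type estimate $\|\varphi(x)\|\le M\|x\|$, and then exploit positive homogeneity of $\varphi$ to transfer local behaviour at $0$ to global behaviour on the cone.

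A preliminary observation I would make is that $\varphi(0)=\{0\}$. Indeed, the hypothesis says $\varphi(0)$ is bounded, and homogeneity gives $\varphi(0)=\varphi(\lambda\cdot 0)=\lambda\,\varphi(0)$ for every $\lambda>0$. If $y\in\varphi(0)$, then $\lambda y\in\varphi(0)$ for all $\lambda>0$, forcing $y=0$ by boundedness. This pins down the target set at $0$ and turns statements about neighborhoods of $\varphi(0)$ into statements about balls $B(0,\varepsilon)$ in $Y$.

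For the direction \emph{bounded} $\Rightarrow$ \emph{upper semicontinuous at $0$}, I take a neighborhood $U$ of $\varphi(0)=\{0\}$, pick $\varepsilon>0$ with $B(0,\varepsilon)\subseteq U$, and use Lemma \ref{equ} to produce $M>0$ with $\|\varphi(x)\|\le M\|x\|$. Setting $V=\{z\in C:\|z\|<\varepsilon/M\}$, a neighborhood of $0$ in $C$, every $z\in V$ satisfies $\|\varphi(z)\|<\varepsilon$, hence $\varphi(z)\subseteq B(0,\varepsilon)\subseteq U$.

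For the converse, the key device is homogeneous scaling. Taking $U=B(0,1)$ as a neighborhood of $\varphi(0)=\{0\}$, upper semicontinuity at $0$ yields $\delta>0$ such that $\|z\|<\delta$ with $z\in C$ implies $\varphi(z)\subseteq B(0,1)$, i.e.\ $\|\varphi(z)\|\le 1$. For arbitrary nonzero $x\in C$ the point $z=\tfrac{\delta}{2\|x\|}x$ lies in $C$ (cone) with $\|z\|=\delta/2<\delta$, so homogeneity gives
\[
\tfrac{\delta}{2\|x\|}\|\varphi(x)\|=\|\varphi(z)\|\le 1,
\]
hence $\|\varphi(x)\|\le(2/\delta)\|x\|$. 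Combined with $\varphi(0)=\{0\}$, Lemma \ref{equ} (with $M=2/\delta$) yields boundedness. I don't foresee a serious obstacle; the only subtle points are the reduction $\varphi(0)=\{0\}$ and the correct interpretation of a neighborhood of a singleton in terms of an open ball, both of which follow directly from the sublinearity hypothesis and the norm topology on $Y$.
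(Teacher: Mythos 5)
Your proof is correct and follows essentially the same route as the paper: the direction \emph{bounded} $\Rightarrow$ \emph{upper semicontinuous at $0$} is verbatim the paper's argument via Lemma \ref{equ} and the ball of radius $\varepsilon/M$. For the converse the paper merely cites the argument of Lemma 1 in \cite{ol}, and your scaling argument (together with the explicit justification that $\varphi(0)=\{0\}$, which the paper uses tacitly) is exactly that standard argument spelled out, so there is nothing genuinely different here.
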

\begin{proof}
If $\varphi$ is upper semicontinuous at zero, then by an argument
similar  to that in the  proof of  (\cite{ol}, Lemma 1) and Lemma
\ref{equ} we get the boundedness of $\varphi$. Conversely, suppose
that  $ \varphi$ satisfies
   (\ref{bound}) and $U$ is a neighborhood  of $\varphi(0)=\{0\}$. Then, there exists
 $\varepsilon>0$  such that
 $N_{\varepsilon}(0)\subseteq U$. Now for every $z\in
 N_{\frac{\varepsilon}{M}}(0)$ we have $\varphi(z)\subseteq U$, i.e.,  $\varphi$ is upper
semicontinuous at zero.
\end{proof}

We define the norm of a bounded  sublinear correspondence
$\varphi:C\twoheadrightarrow Y$ by $$\| \varphi\|=\inf\{M>0:\|
\varphi(x)\|\leq M\| x\|,x \in C\}.$$

\begin{thm}\label{contin} Let  $E=\{e_1,e_2,...,e_n\}$ be a basis of $C$.
If $\varphi:C\rightarrow c(Y)$ is   linear, then $\varphi$ is continuous.
\end{thm}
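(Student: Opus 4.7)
The plan is to represent $\varphi$ through the basis $E$ and reduce continuity to a routine Hausdorff-metric Lipschitz estimate on coordinates. First I would establish the auxiliary identity $\varphi(0)=\{0\}$. Since $0=0+0\in C$, additivity gives $\varphi(0)=\varphi(0)+\varphi(0)$; if $a\in\varphi(0)$ attains the maximum norm $r$ (which exists because $\varphi(0)\in c(Y)$), then $2a=a+a\in\varphi(0)+\varphi(0)=\varphi(0)$, forcing $2r\le r$ and hence $\varphi(0)=\{0\}$.

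Next, the linear independence of $E$ ensures that every $x\in C$ has a unique expansion $x=\sum_{i=1}^{n}\lambda_i(x)\,e_i$ with $\lambda_i(x)\ge 0$, and the coordinate maps $\lambda_i$ are the restrictions to $C$ of continuous linear functionals on the finite-dimensional subspace $\mathrm{span}(E)$. Applying additivity, positive homogeneity, and $\varphi(0)=\{0\}$ (with the convention $0\cdot A:=\{0\}$), I would verify
$$\varphi(x)=\sum_{i=1}^{n}\lambda_i(x)\,\varphi(e_i),\qquad x\in C.$$

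Continuity of $\varphi$ then follows from the standard Hausdorff-metric inequalities $\mathfrak{h}(A_1+A_2,B_1+B_2)\le\mathfrak{h}(A_1,B_1)+\mathfrak{h}(A_2,B_2)$ and $\mathfrak{h}(\lambda A,\mu A)\le|\lambda-\mu|\sup_{a\in A}\|a\|$, which together give
$$\mathfrak{h}\Bigl(\sum_{i=1}^{n}\lambda_i\varphi(e_i),\,\sum_{i=1}^{n}\mu_i\varphi(e_i)\Bigr)\le\sum_{i=1}^{n}|\lambda_i-\mu_i|\,\|\varphi(e_i)\|.$$
Composing this Lipschitz bound with the continuous coordinate map shows that $\varphi\colon C\to(c(Y),\mathfrak{h})$ is continuous as a single-valued function, and Theorem $17.15$ of \cite{alip}, recalled in the preliminaries, translates this into continuity of $\varphi$ as a correspondence.

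The main obstacle I anticipate is ensuring that the representation $\varphi(x)=\sum\lambda_i(x)\varphi(e_i)$ remains valid at boundary points of $C$ where some coordinates vanish, in particular at $x=0$: additivity and positive homogeneity yield the formula directly only when all $\lambda_i(x)>0$, and the usual cancellation argument that would extend it to the boundary requires convex values, which we do not assume. The compactness argument identifying $\varphi(0)=\{0\}$ in the first step is designed precisely to bypass this difficulty.
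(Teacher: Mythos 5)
Your proof is correct, but it takes a genuinely different route from the paper's. The paper embeds the hyperspace into R\aa dstr\"om's normed space $\Delta$ of equivalence classes $[A,B]$, observes that $x\mapsto[\varphi(x),\{0\}]$ is linear and extends to a linear operator on the finite-dimensional space $C-C$, and then invokes automatic continuity of linear maps on finite-dimensional domains, recovering $\mathfrak{h}(\varphi(x_n),\varphi(x_0))=\|f(x_n)-f(x_0)\|\to 0$. You instead establish the coordinate representation $\varphi(x)=\sum_{i}\lambda_i(x)\varphi(e_i)$ directly and derive the explicit Lipschitz bound $\mathfrak{h}(\varphi(x),\varphi(y))\le\sum_{i}|\lambda_i(x)-\lambda_i(y)|\,\|\varphi(e_i)\|$ from elementary Hausdorff-metric inequalities, then compose with the continuous coordinate functionals on $\mathrm{span}(E)$ and finish with Theorem 17.15 of \cite{alip}. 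Both arguments rest on finite-dimensionality, but yours is more elementary and self-contained, and it buys something real: the R\aa dstr\"om machinery (transitivity of $\sim$ via the cancellation law, and the well-definedness of the norm on classes) is set up for \emph{convex} compact sets, whereas the theorem only assumes values in $c(Y)$; your direct estimates need no convexity at all, so your argument covers the stated generality cleanly where the paper's proof quietly applies the $cc(Y)$ construction to possibly non-convex values. One small remark: the boundary difficulty you flag is not actually an obstacle --- when some $\lambda_i(x)=0$ you simply apply additivity to the sum of the remaining (strictly positive) terms, so no cancellation law is needed --- but your compactness argument for $\varphi(0)=\{0\}$ is correct and is in any case required for continuity at the apex.
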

\begin{proof}
Let $\sim$ denote the R$\dot{\mbox{a}}$dstr$\ddot{\mbox{o}}$m's
equivalence relation  between pairs of members of  $cc(Y)$ defined by
 $$(A,B)\sim (C,D)\Leftrightarrow A+D=B+C,\,\,\,\,\,\,\,\,\,\,\,\,(A,B\in cc(X))$$ and $[A,B]$ denote the  equivalence class of  $(A,B)$ (see \cite{rad}).
 The set of all equivalence classes
$\Delta$ with the operations
$$[A,B]+[C,D]=[A+C,B+D],$$
$$\lambda [A,B]=[\lambda A,\lambda B]\,\,\,\,\,\,\,\,\,\,\,\,\,\,\,(\lambda\geq 0),$$
$$\lambda [A,B]=[-\lambda B,-\lambda A]\,\,\,\,\,\,\,\,\,\,\,\,\,(\lambda< 0),$$ and
the norm $$\|[A,B]\|:=\mathfrak{h}(A,B),$$ constitute a  real
linear normed space (see \cite{rad}). The function
$f:C\rightarrow\Delta$ defined by
$$f(x)=[\varphi(x),\{0\}],$$ is linear and  can be
extended to a linear operator $\hat{f}:C-C\rightarrow\Delta$ by
$$\hat{f}(x-y)=f(x)-f(y), \,\,\,\,\,\,\,\,\,\,\,(x,y\in C).$$ Since $C-C$ is of  finite dimension, $\hat{f}$
and consequently $f$ are continuous. Let $x_0\in C$ and $(x_n)$
be a sequence of $C$ converging to $x_0$. Then
$$\lim_{n\rightarrow
\infty}\mathfrak{h}(\varphi(x_n),\varphi(x_0))=\lim_{n\rightarrow
\infty}\|f(x_n)-f(x_0)\|=0,$$ that is, $\varphi$ is continuous.
\end{proof}

 Each set of the form $M :=M_1\times ...\times M_n$, where $ M_i \subseteq
{\mathbb{R}}^n \,(i=1,...,n)$ will be called a multimatrix. If
$E=\{e_1,e_2,...,e_n\}$ is a basis of $C$ and
$\varphi:C\twoheadrightarrow C$ is linear, then there exists an
isomorphism $l:C-C\rightarrow {\mathbb{R}}^n$ defined by
\begin{equation}\label{iso} l(\sum_{j=1}^{n}\lambda_j e_j
)=(\lambda_1,...,\lambda_n)^T, \end{equation}
 that maps $C$ onto
${[0,+\infty)}^n$ and $M_{\varphi}:=l(\varphi(e_1))\times ...
\times l(\varphi(e_n))$ is a nonempty convex multimatrix
\cite{ol}.

\begin{corol}\label{lincontin} Let  $E=\{e_1,e_2,...,e_n\}$ be a basis of $C$.
If $\varphi:C\twoheadrightarrow C$ is   a linear correspondence,
then
%there
%exists an isomorphism $l:C-C\rightarrow {\mathbb{R}}^{n}$ such
%that
%%$$\varphi(x)=\{l^{-1}Al(x)\}_{A\in M_{\varphi}},$$ where  $x\in X$ and $A=[a_{ij}]$ is a
%$$\varphi(x)=\{{\sum}_{i=1}^{n} \sum_{j=1}^{n}({\lambda}_j a_{ij})e_i
%:{A=[a_{ij}]}\in M_{\varphi} \}\,\,\,\,\,\,\,\,\,\,\,\
%(x=\sum_{j=1}^n \lambda_j e_j\in C).$$
\begin{equation}\label{form} \varphi(x)=\{l^{-1}Al(x)\}_{A\in M_{\varphi}},
\,\,\,\,\,\,\,\,\ (0\neq x\in C)
\end{equation}
 and $\varphi$ is  lower semicontinuous at every point.
\end{corol}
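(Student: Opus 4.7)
The plan is to first establish the representation formula (\ref{form}) by direct computation, and then to use it to exhibit a continuous single-valued selection of $\varphi$ passing through any prescribed point; the existence of such a selection immediately yields lower semicontinuity.

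First I would write $x=\sum_{j=1}^{n}\lambda_j e_j$ with $\lambda_j\ge 0$, not all zero, so that additivity and positive homogeneity of $\varphi$ give $\varphi(x)=\sum_{j=1}^{n}\lambda_j\varphi(e_j)$ (with the convention $0\cdot\varphi(e_j)=\{0\}$). On the other hand, each $A\in M_{\varphi}$ is to be identified with the $n\times n$ matrix whose $j$-th column $a_j$ ranges over $l(\varphi(e_j))\subseteq\mathbb{R}^n$. Since $l(x)=(\lambda_1,\ldots,\lambda_n)^T$ by (\ref{iso}), the matrix--vector product gives $A l(x)=\sum_{j=1}^{n}\lambda_j a_j$ and hence $l^{-1}A l(x)=\sum_{j=1}^{n}\lambda_j\, l^{-1}(a_j)$. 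As $A$ runs through $M_{\varphi}$ the columns vary independently in $l(\varphi(e_j))$, so $\{l^{-1}A l(x):A\in M_{\varphi}\}=\sum_{j=1}^{n}\lambda_j\varphi(e_j)=\varphi(x)$, establishing (\ref{form}).

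For lower semicontinuity at a nonzero $x_0\in C$, fix an open set $U$ with $\varphi(x_0)\cap U\ne\emptyset$ and pick $y_0$ in that intersection. By (\ref{form}) there is $A_0\in M_{\varphi}$ with $y_0=l^{-1}A_0\, l(x_0)$. The single-valued map $s(z):=l^{-1}A_0\, l(z)$ is a linear operator on the finite-dimensional space $C-C$, hence continuous, and the formula shows $s(z)\in\varphi(z)$ for every $z\in C\setminus\{0\}$. Continuity of $s$ at $x_0$ then yields a neighborhood $V$ of $x_0$ on which $s(z)\in U$, which forces $\varphi(z)\cap U\ne\emptyset$ for every $z\in V\cap C$ with $z\ne 0$.

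The main obstacle is essentially notational: one has to reinterpret $M_{\varphi}$, originally defined as a Cartesian product of subsets of $\mathbb{R}^n$, as a set of $n\times n$ matrices acting on $\mathbb{R}^n$, so that the columnwise parameterization of $M_{\varphi}$ matches the matrix--vector product $A l(x)$. Lower semicontinuity at $x_0=0$ (when $0\in C$) follows from the same selection idea applied to any $y\in\varphi(0)\cap U$, using continuity at $0$ of a linear selection built from the multimatrix data.
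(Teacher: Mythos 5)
Your derivation of (\ref{form}) and your argument at nonzero points are essentially the paper's own: the paper also writes $x=\sum_{j}\lambda_j e_j$, identifies each $A\in M_{\varphi}$ columnwise with a choice $u_j\in\varphi(e_j)$ so that $l^{-1}Al(x)=\sum_j\lambda_j u_j$ and hence $\{l^{-1}Al(x)\}_{A\in M_\varphi}=\sum_j\lambda_j\varphi(e_j)=\varphi(x)$, and it then uses the continuity of the maps $z\mapsto l^{-1}Al(z)$ as selections to get lower semicontinuity at every $x\neq 0$. Up to that point there is nothing to change.

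The one step that does not go through as written is lower semicontinuity at $0$ (when $0\in C$). The corollary does not assume compact or even bounded values, so $\varphi(0)$ need not be $\{0\}$; the identities $\varphi(0)=\varphi(0)+\varphi(0)$ and $\varphi(0)=\lambda\varphi(0)$ force $\varphi(0)=\{0\}$ only when $\varphi(0)$ is bounded (for instance $\varphi(0)$ could be a nontrivial subcone). A \emph{linear} selection $s(z)=l^{-1}Al(z)$ satisfies $s(z)\to 0$ as $z\to 0$, so it can only witness lower semicontinuity at $0$ relative to the single point $0$, which need not even lie in $\varphi(0)$; it says nothing about an arbitrary $y\in\varphi(0)\cap U$. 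The repair --- which your last sentence gestures at but does not carry out, and which is exactly what the paper does --- is to use the \emph{affine} selection $z\mapsto y+l^{-1}Al(z)$ and to note that additivity gives $y+l^{-1}Al(z)\in\varphi(0)+\varphi(z)=\varphi(0+z)=\varphi(z)$, so these values lie in $\varphi(z)$ and tend to $y$ (the paper phrases this sequentially via Theorem 17.21 of Aliprantis--Border). With that correction, and after shrinking $V$ away from $0$ in your nonzero case, your proof coincides with the paper's.
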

\begin{proof} If $x=\sum_{j=1}^n \lambda_j e_j\in C,$ we
have $l^{-1}Al(x)={\sum}_{i=1}^{n} \sum_{j=1}^{n}({\lambda}_j
a_{ij})e_i$, for every ${A=[a_{ij}]}\in M_{\varphi}$ and therefore

$$\{{\sum}_{i=1}^{n} \sum_{j=1}^{n}({\lambda}_j a_{ij})e_i
:{A=[a_{ij}]}\in M_{\varphi} \}=\{l^{-1}Al(x)\}_{A\in
M_{\varphi}}.$$ %where  $x\in X$ and $A=[a_{ij}]$.
Thus, it suffices  to show that  $$\varphi(x)=\{{\sum}_{i=1}^{n}
\sum_{j=1}^{n}({\lambda}_j a_{ij})e_i :{A=[a_{ij}]}\in M_{\varphi}
\} \,\,\,\,\,\,\,\,\,(0\neq x=\sum_{j=1}^{n}{\lambda}_j e_j\in
C).$$
%$$\varphi(x)=\{l^{-1}Al(x)\}_{A\in M_{\varphi}},$$ where  $x\in X$ and $A=[a_{ij}]$. we can
%define a set-valued function ${\varphi}_M:C\rightarrow n(C)$ by
%the formula ${{\varphi}_M}(x):=\{{\sum}_{i=1}^{n}
%\sum_{j=1}^{n}({\lambda}_j a_{ij})e_i :{A=[a_{ij}]}\in M_{\varphi}
%\}$ for $x=\sum_{j=1}^n \lambda_j e_j\in C.$
Let ${\varphi_M}(x)$ be the quantity of the right hand and  $z\in
\varphi(x)$. We may find $ u_j\in \varphi(e_j) $ and then $A=[a_{ij}]\in
 M_{\varphi}$  such
that

$$z=\sum_{j=1}^n \lambda_j u_j =\sum_{j=1}^n \lambda_j \sum_{i=1}^{n} ( a_{ij}e_i)=
\sum_{j=1}^n \sum_{i=1}^n ({\lambda}_j a_{ij})e_i.$$
 Therefore $
z\in {\varphi_M}(x).$ It is easy to see that $ \varphi_M(x)
\subseteq \varphi(x)$ and therefore $\varphi(x)={\varphi}_M(x)$
for every $x\in C\setminus\{0\}.$ To see the lower semicontinuity
of $\varphi,$ let $x\in C\setminus\{0\}$ and $U$ be an open set
with $U\cap\varphi(x)\neq\emptyset.$ From (\ref{form}) and the
continuity of $l^{-1}Al$ for each $A\in M_\varphi$ there exists an
open neighborhood $V$ of $x$ such that  $\varphi(z)\cap
U\neq\emptyset$, for each $z\in V.$ Now to see the lower
semicontinuity of $\varphi$ at zero, let $(x_n)_n$ be a sequence
convergent to zero and $y\in \varphi(0).$ We can assume that all
$x_n$'s are nonzero. Fix an $A\in M_\varphi$ so
$\lim_{n\rightarrow\infty} l^{-1}Al(x_{{n}})= 0$ and the sequence
$(z_n)$ with
$${z_{n}}=y+l^{-1}Al(x_{{n}})\in
\varphi(0)+\varphi(x_{{n}})=\varphi(x_{{n}}),$$  tends to $y$.
Thus, by Theorem 17.21 in \cite{alip}, the proof is complete.
%The continuity of $\varphi$ is a direct consequence of Theorem
%\ref{contin}
\end{proof}

 In Corollary
\ref{lincontin} if $\varphi:C\rightarrow c(C)$, then $\varphi$ is
continuous, by Theorem \ref{contin}.

%\begin{corol}\label{unisub}
%Let $E=\{e_1,...,e_n\}$ be a basis of $C$ and $Y$ be a Banach
%space. Then, there exists $M>0$ such that for every
 %sublinear correspondence $\varphi:C\twoheadrightarrow c(Y)$

Let $C$ be a cone with a finite basis $\{e_1,e_2,...,e_n\}$. For
every sublinear correspondence $\varphi:C\rightarrow c(Y)$ there
exists a linear continuous correspondence
$\widehat{\varphi}:C\rightarrow cc(Y_0)$ containing  $\varphi$,
defined by
%$$\mathfrak{h}(\varphi(x),\varphi(y))\leq M \| \widetilde{\varphi}\|\,\| x-y\|,\,\,\,\,\,\,\,\, (x,y\in C),$$
%where $\widetilde{\varphi}:C\twoheadrightarrow cc(Y)$ is a linear
%continuous correspondence defined by
\begin{equation}\label{tild}
\widehat{\varphi}(x)=\sum_{j=1}^n {\lambda_j}
\overline{co}(\varphi(e_j)),
\end{equation}
for every $x=\sum_{j=1}^n \lambda_j e_j$,  where $Y_0$ and
$\overline{co}(\varphi(e_j))$ denote the completion of $Y$ and the closed convex hull of
the set $\varphi(e_j)$ in $Y_0$, respectively.
%\end{corol}
%\begin{proof}
%If $x,y\in C$, then by Lemma 3.76 in \cite{alip} there are $z_1
%\in \varphi(x)$ and $z_2 \in \varphi(y)$ such that
%$$\mathfrak{h}(\varphi(x),\varphi(y))=\|z_1 -z_2 \|,$$ and

%\begin{center}
%\begin{tabular}{lll}
%$\|z_1 -z_2 \|$&$\leq$&$d(\widetilde{\varphi}(x),z_2)+d(\widetilde{\varphi}(y),z_1)+\mathfrak{h}(\widetilde{\varphi}(x),\widetilde{\varphi}(y))$\\[0.2cm]
%&$\leq$&$3\mathfrak{h}(\widetilde{\varphi}(x),\widetilde{\varphi}(y)).$
%\end{tabular}
%\end{center}
%Now applying Lemma \ref{linuni} for $\widetilde{\varphi}$, we find
%$M>0$ that
%$$\mathfrak{h}(\varphi(x),\varphi(y))\leq M\| \widetilde{\varphi}\|\,\|
%x-y\|,$$ for all $x,y\in C$.\end{proof}

%*******Note that by replacing of definition
%$\widetilde{\varphi}(x)=\sum_{j=1}^n \lambda_j \varphi(e_j)$ for
% \ref{tild} we can find $M>0$ such that for every
%$\varphi:C\twoheadrightarrow cc(Y)$ corollary \ref{unisub}
%holds.******
\begin{corol}\label{subcontin}
 Let $E=\{e_1,e_2,...,e_n\}$ be a basis of $C$. If $\varphi:C\rightarrow c(Y)$ is a sublinear
 correspondence, then\\
i) $\varphi$  is  upper semicontinuous at every point;\\
 ii) moreover, if
$\varphi:C\twoheadrightarrow C$, then for every $x\in
C\setminus\{0\}$ we have
$$\varphi(x)\subseteq (l^{-1}Al(x))_{A\in M_{co(\varphi)}},$$ where $l$ is
the isomorphism given  in (\ref{iso}).

\end{corol}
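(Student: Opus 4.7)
The plan is to treat the two parts separately, in both cases exploiting the sublinearity of $\varphi$ together with the basis expansion $x=\sum_{j=1}^n\lambda_j e_j$. For part (ii), I would start from any $z\in\varphi(x)$ with $x\neq 0$ and iterate the inclusion $\varphi(a+b)\subseteq\varphi(a)+\varphi(b)$ together with positive homogeneity to obtain $\varphi(x)\subseteq\sum_{j=1}^{n}\lambda_j\varphi(e_j)$; this produces a decomposition $z=\sum_j\lambda_j u_j$ with $u_j\in\varphi(e_j)\subseteq co(\varphi(e_j))$. Building the matrix $A$ whose $j$-th column is $l(u_j)$, one obtains $A\in M_{co(\varphi)}$, and the linearity of $l$ gives $Al(x)=\sum_j\lambda_j l(u_j)=l(z)$, hence $l^{-1}Al(x)=z$, which yields the claimed inclusion.

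For part (i), I would split into $x_0=0$ and $x_0\neq 0$. For the origin, the same iteration gives $\|\varphi(x)\|\leq\sum_j\lambda_j\|\varphi(e_j)\|\leq M\|x\|$, because the coordinate functionals $x\mapsto\lambda_j$ are continuous linear functionals on the finite-dimensional subspace $C-C$. Hence $\varphi$ is bounded, and Lemma \ref{hembound} yields upper semicontinuity at $0$.

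For $x_0=\sum_j\lambda_j^0 e_j\neq 0$, set $J=\{j:\lambda_j^0>0\}$. For $z=\sum_j\lambda_j e_j\in C$ close enough to $x_0$, I would introduce the scalar
\[
\alpha(z):=\min_{j\in J}\frac{\lambda_j}{\lambda_j^0}>0,
\]
which tends to $1$ as $z\to x_0$. A short coordinate-wise verification shows that $z-\alpha(z)x_0\in C$ and $\|z-\alpha(z)x_0\|\to 0$. Sublinearity and positive homogeneity then yield
\[
\varphi(z)\subseteq\alpha(z)\varphi(x_0)+\varphi(z-\alpha(z)x_0),
\]
where $\alpha(z)\varphi(x_0)\to\varphi(x_0)$ in Hausdorff distance by compactness of $\varphi(x_0)$, and $\|\varphi(z-\alpha(z)x_0)\|\leq M\|z-\alpha(z)x_0\|\to 0$ by the boundedness established above. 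For any preassigned $\varepsilon>0$ these two estimates force $\varphi(z)\subseteq N_\varepsilon(\varphi(x_0))$ whenever $z$ is sufficiently near $x_0$, which delivers upper semicontinuity.

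The main obstacle is precisely the case $x_0\neq 0$: nearby points $z\in C$ need not be of the form $x_0+h$ with $h\in C$, so sublinearity cannot be applied directly to a small increment. The trick of replacing $x_0$ by the slightly shrunk vector $\alpha(z)x_0$ is what forces $z-\alpha(z)x_0$ to lie in $C$ and makes sublinearity usable. A purely ``outer'' argument via the linear majorant $\widehat{\varphi}$ and Theorem \ref{contin} does not seem to suffice for part (i), since a general neighborhood of $\varphi(x_0)$ need not contain the larger set $\widehat{\varphi}(x_0)$.
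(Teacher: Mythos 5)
Your proof is correct and follows essentially the same route as the paper: boundedness together with Lemma \ref{hembound} for upper semicontinuity at the origin, the decomposition of a nearby point $z$ as a scalar multiple of $x_0$ plus a cone element with small image for upper semicontinuity at $x_0\neq 0$, and the column-matrix computation underlying Corollary \ref{lincontin} for part (ii). The only notable difference is in bookkeeping: you use a $z$-dependent, coordinate-wise shrinking factor $\alpha(z)$, whereas the paper fixes $\alpha\in(0,1)$ and instead verifies the ball inclusion $N_{r}^C(x_0)\subseteq N_{r}^C(\alpha x_0)+(1-\alpha)x_0$; your variant sidesteps that geometric verification.
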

\begin{proof}
i) Let $Y_0$ be the completion of $Y.$ Consider
$\widehat{\varphi}$ as given in (\ref{tild}). Obviously,
$\widehat{\varphi}:C\twoheadrightarrow Y_0$ is a linear
correspondence with convex and compact values. From Theorem
\ref{contin}, since $\widehat{\varphi}$ is upper semicontinuous at
zero and $\|\varphi\|\leq\|\widehat{\varphi}\|$, Lemma
\ref{hembound} implies that
%and Corollary \ref{unisub}
$\varphi$ is upper semicontinuous at zero. Now let $x_0$ be a
nonzero element in $C$. For any neighborhood $U$ of
$\varphi(x_0)$, there exists an open ball $U_0$ centered  at zero
such that,
$$\varphi(x_0)-U_0+U_0\subseteq U.$$ Since $\varphi$ is upper semicontinuous
at zero then there is an open  neighborhood $V_0$ of zero in $C$  such that
$\varphi(z)\subseteq U_0$ for every $z\in V_0$. We may suppose
that $\alpha x_0\in V_0$, for some $\alpha\in(0,1)$. Now there is
an open ball $W_0$ of $\alpha x_0$ such that $W_0\subseteq V_0$
with $x_0\notin \overline{W_0}$. Putting $W_{x_0}=x_0-\alpha x_0+W_0$ we see
that $W_{x_0}$ is open. Since $x_0 =\Sigma_{i=1}^n \lambda_i
e_i \notin \overline{W_0}$ there exist $r>0$ and open balls $N_{r}^C (x_0)$
and $N_{r}^C(\alpha x_0)$ in $C$ such that $N_{r}^C(\alpha
x_0)\subseteq W_0$, $N_{r}^C (x_0)\cap W_0=\emptyset$ and
$r<\alpha{\lambda_i}$ for $i=1,\cdots,n$ with $\lambda_i\neq0$. We now show
that
$$N_{r}^C (x_0)\subseteq N_{r}^C(\alpha x_0)+(1-\alpha)x_0.$$
Without loss of generality we assume that
$\|e_1\|=\|e_2\|=\cdots=\|e_n\|=1$. Let $z=\Sigma_{i=1}^n \mu_i e_i
\in N_{r}^C (x_0)$, so $-r< \mu_i - \lambda_i <r$ for each $1\leq i\leq n$
with $\lambda_i\neq0$. Therefore, $$0<-r + \alpha \lambda_i < \mu_i - \lambda_i+\alpha
\lambda_i <r+ \alpha \lambda_i
\,\,\,\,\,\,\,\,\,\,\,(i=1,\cdots,n),\,\,\,\,\lambda_i\neq0,$$ and
$$0<\mu_i<r \,\,\,\,\,\,\,\,\,\,\,(i=1,\cdots,n),\,\,\,\,\,\,\lambda_i=0.$$ Thus
$\Sigma_{i=1}^n (\mu_i -(1-\alpha)\lambda_i)e_i \in C$. Since
$$\|z-(1-\alpha)x_0 - \alpha x_0\|=\|z-x_0\|<r,$$ so $$N_{r}^C (x_0)-(1-\alpha)x_0\subseteq N_{r}^C(\alpha
x_0).$$

 Now for every $z\in W_{x_0}$,
there is a point $z_0\in W_0$ such that $z=x_0-\alpha x_0+z_0$ and from above (note $\alpha x_0, z_0 \in V_0$)

\begin{center}
\begin{tabular}{lll}
$\varphi(z)=\varphi(x_0-\alpha x_0+z_0)$&$\subseteq$& $(1-\alpha)\varphi(x_0)+\varphi(z_0)$\\[0.2cm]
&$\subseteq$&$ \varphi(x_0)-\alpha\varphi(x_0)+\varphi(z_0) $\\[0.2cm]
&$\subseteq$&$\varphi(x_0)-U_0 +U_0$\\[0.2cm]

&$\subseteq$& $U.$
\end{tabular}
\end{center}
Thus $\varphi$ is upper semicontinuous at $x_0$.\\
ii)  Consider the linear correspondence $\widehat{\varphi}$ as (\ref{tild}).
%$$\widehat{\varphi}(\sum_{j=1}^n \lambda_j e_j)=\sum_{j=1}^n
%\lambda_j co(\varphi(e_j)).$$
 By Corollary \ref{lincontin},
$\widehat{\varphi}$ is of the form (\ref{form}). Since  $M_{co(\varphi)}=M_{\widehat{\varphi}}$  and
$\varphi(x)\subseteq\widehat{\varphi}(x)$ for each nonzero $x$, we
obtain the desired inclusion.
\end{proof}

The following example shows that a sublinear correspondence need
not be lower semicontinuous at every point.

\begin{ex} {\rm Define
$\varphi:[0,+\infty)\times[0,+\infty)\rightarrow
[0,+\infty)\times[0,+\infty)$ by

$$\varphi(x,y)=\left\{%
\begin{array}{ll}
    \{(0,0)\} &  {x\geq 0 , y>0 ;} \\
  \{(t,0):0\leq t\leq x\} & {x\geq 0 , y=0.} \\
\end{array}%
\right.$$
 }\end{ex}

 It is easy to see that the sublinear
correspondence $\varphi$ is not lower semicontinuous at every
point $(x,0)$ where $x>0$.
%*****We recall that that in the second part of Corollary
%\ref{subcontin} compactness of the values of a sublinear
%correspondence is not necessary for having the mentioned form in
%Corollary \ref{subcontin} and automatically is lower
%semicontinuous.*******
% We can omit  the convex value assumption from theorem \ref{subcontin} , since $C-C$ is a Banach
%space so $coh(\varphi)$ is a compact and convex value sublinear
%set-valued function so by theorem \ref{subcontin} $coh(\varphi)$ is
%continuous. Since $\spadesuit\spadesuit\spadesuit C$ is convex and
%$coh(\varphi)$ is upper semicontinuous so $\varphi$ is upper
%semicontinuous at every point. Obviously $\varphi$ is lower
%semicontinuous at every point. Therefore $\varphi$ is continuous at
%every point and has the above form.
%For a sublinear s.v function $\spadesuit\spadesuit\spadesuit\varphi$ we define $M_{\varphi}$ the
%set of all matrixes in $M_1\times ...\times M_n$ where
%$M_i:=l(\varphi(e_i)).$

For the rest of this section we  consider, inspired by \cite{ol},
the relations between Hausdorff distance of the unit matrix and
multimatrix of a linear correspondence and invertibility of its
selections.

 Every cone $C$ with a finite
basis $E=\{e_1,...,e_n\}$ induces a norm  on the vector space of
all $n\times n$ matrices $\mathbb{M}_n (\mathbb{R})$ by
\begin{equation}
\label{norm}\,\,\,  \| A\|=\sup\{{\ \|{\sum_{i=1}^n (\sum_{j=1}^n
{{\lambda_j} a_{ij}})e_i}\|}: {\sum_{j=1}^n {{\lambda_j}
e_{j}}}\in C, \|\sum_{j=1}^n {{\lambda}_j} e_{j}\|=1\},
\end{equation}
for every $A=[a_{ij}]$ (see \cite{ol}).

In the following, $\mathfrak{h}_1$ and $\mathbb{I}$ will denote
the Hausdorff metric derived from the norm given  in (\ref{norm})
and the unit matrix, respectively.
\begin{lem} \label{lemma} Suppose that $C$ has a finite cone basis.
If $\varphi:C\rightarrow c(C)$ is a linear correspondence, then
$$\mathfrak{h}_1(M_{\varphi},\{\mathbb{I}\})=\sup\{\mathfrak{h}(\varphi(x),\{x\}):x\in C,\|
x\|=1\}.$$
\end{lem}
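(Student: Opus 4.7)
The plan is to reduce both sides of the claimed equality to the same double supremum $\sup_{A,x} \|l^{-1}(A-\mathbb{I})l(x)\|$ and then interchange the order of the suprema.

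First I would invoke Corollary \ref{lincontin} to rewrite $\varphi(x)$ for nonzero $x\in C$ as the image set $\{l^{-1}Al(x):A\in M_{\varphi}\}$. Since $l^{-1}\mathbb{I}l(x)=x$, the Hausdorff distance from $\varphi(x)$ to the singleton $\{x\}$ collapses to a single supremum:
$$\mathfrak{h}(\varphi(x),\{x\})=\sup_{A\in M_{\varphi}}\|l^{-1}Al(x)-x\|=\sup_{A\in M_{\varphi}}\|l^{-1}(A-\mathbb{I})l(x)\|,$$
using the fact that for any set $S$ and any singleton $\{p\}$, $\mathfrak{h}(S,\{p\})=\sup_{s\in S}\|s-p\|$ because $\sup_{s\in S}\|s-p\|\geq \inf_{s\in S}\|s-p\|$.

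Second, I would treat the left-hand side similarly. Because $\{\mathbb{I}\}$ is a singleton in $\mathbb{M}_n(\mathbb{R})$,
$$\mathfrak{h}_1(M_{\varphi},\{\mathbb{I}\})=\sup_{A\in M_{\varphi}}\|A-\mathbb{I}\|,$$
and by the definition (\ref{norm}) of the norm on $\mathbb{M}_n(\mathbb{R})$,
$$\|A-\mathbb{I}\|=\sup\bigl\{\|l^{-1}(A-\mathbb{I})l(x)\|:x\in C,\;\|x\|=1\bigr\}.$$
Substituting gives
$$\mathfrak{h}_1(M_{\varphi},\{\mathbb{I}\})=\sup_{A\in M_{\varphi}}\;\sup_{\substack{x\in C\\\|x\|=1}}\|l^{-1}(A-\mathbb{I})l(x)\|.$$

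Third, taking the supremum over unit-norm $x\in C$ on the right-hand side expression for $\mathfrak{h}(\varphi(x),\{x\})$ yields
$$\sup_{\substack{x\in C\\\|x\|=1}}\mathfrak{h}(\varphi(x),\{x\})=\sup_{\substack{x\in C\\\|x\|=1}}\;\sup_{A\in M_{\varphi}}\|l^{-1}(A-\mathbb{I})l(x)\|,$$
and interchanging the order of these two suprema (which is always valid for nonnegative real-valued expressions) produces exactly the quantity obtained for $\mathfrak{h}_1(M_{\varphi},\{\mathbb{I}\})$.

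The plan has no real obstacle; the only point requiring a moment's care is the first reduction of each Hausdorff distance, namely checking that when one of the two sets is a singleton, the max defining $\mathfrak{h}$ (or $\mathfrak{h}_1$) is achieved by the ``sup'' term rather than the ``inf'' term. Beyond this, the argument is essentially a rearrangement of suprema, made possible by the explicit parametrization of $\varphi(x)$ provided by Corollary \ref{lincontin}.
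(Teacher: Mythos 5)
Your proposal is correct, and it takes a genuinely different (and more self-contained) route than the paper. The paper proves the two inequalities separately: the direction $\mathfrak{h}_1(M_{\varphi},\{\mathbb{I}\})\leq\sup_{\|x\|=1}\mathfrak{h}(\varphi(x),\{x\})$ is simply imported from Lemma 2 of \cite{ol}, while the reverse direction uses Corollary \ref{lincontin} together with an attainment argument (Lemma 3.76 of \cite{alip}) to pick, for each unit $x$, a matrix $A_x\in M_{\varphi}$ realizing $\mathfrak{h}(\varphi(x),\{x\})=\|l^{-1}A_xl(x)-x\|\leq\|A_x-\mathbb{I}\|$, and then takes suprema. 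You instead observe that both sides equal the single quantity $\sup_{A\in M_{\varphi}}\sup_{x\in C,\,\|x\|=1}\|l^{-1}(A-\mathbb{I})l(x)\|$: the Hausdorff distance to a singleton degenerates to a plain supremum on each side, the norm (\ref{norm}) of $A-\mathbb{I}$ is by definition a supremum over unit vectors of $C$, and the two iterated suprema may be interchanged. This yields the identity in one stroke, needs no external lemma for either inequality, and in particular avoids the attainment step (so no compactness of $M_{\varphi}$ or of $\varphi(x)$ is invoked beyond what makes the quantities finite). The only shared ingredient, and the real content in both arguments, is the parametrization $\varphi(x)=\{l^{-1}Al(x)\}_{A\in M_{\varphi}}$ for $x\neq 0$ from Corollary \ref{lincontin}; since the supremum in the statement ranges over unit vectors, the exclusion of $x=0$ there is harmless. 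What the paper's route buys is economy by citation; what yours buys is transparency and independence from \cite{ol}.
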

\begin{proof}
 From Lemma 2 in \cite{ol}, we have $$\mathfrak{h}_1(M_{\varphi},\{\mathbb{I}\})\leq \sup\{\mathfrak{h}(\varphi(x),\{x\}):x\in C,\|
x\|=1\},$$ and from  Corollary \ref{lincontin},
$\varphi(x)={\{l^{-1}Al(x)\}}_{A\in M_{\varphi}}$ for each $x\in
C\setminus\{0\}.$ If $x\in C$ with $\|x\|=1$, then by Lemma 3.76
in \cite{alip} there exists ${A_x}\in {M_{\varphi}}$  such that
$$\mathfrak{h}(\varphi(x),\{x\})=\|l^{-1}{A}_x l(x)-x\|\leq \|A_x
-\mathbb{I}\|.$$ Therefore
\begin{center}
\begin{tabular}{lll}
$\sup\{\mathfrak{h}(\varphi(x),\{x\}):x\in C,\|
x\|=1\}$&$\leq$& $\sup\{ \|A_x -\mathbb{I}\|:x\in C,\|x\|=1\} $\\[0.2cm]
&$\leq$&$\sup\{ {\|A -\mathbb{I}\|}:A\in {M_{\varphi}}\}$\\[0.2cm]
&=&$\mathfrak{h}_1 (M_{\varphi},\{\mathbb{I}\}).$
\end{tabular}
\end{center}
\end{proof}
%By regarding to corollary \ref{lincontin},
%$\varphi(x)={\{l^{-1}Al(x)\}}_{A\in M_{\varphi}}$ for $x\in C.$ Now
%if $x\in C$ with $\|x\|=1$ then there exist ${A_x}\in {M_{\varphi}}$
%such that
%$$\mathfrak{h}(\varphi(x),x)=\|l^{-1}{A}_x l(x)-x\|\leq \|A_x
%-I\|.$$ Therefore
%\begin{center}
%\begin{tabular}{lll}
%${\sup}_{\|x\|=1} \mathfrak{h}(\varphi(x),x)$&$\leq$& ${\sup}_{\|x\|=1} \|A_x -I\| $\\[0.2cm]
%&$\leq$&${\sup}_{A\in {M_{\varphi}}} {\|A -I\|}$\\[0.2cm]

%&=&$\mathfrak{h}_1 (M_{\varphi},I).$
%\end{tabular}
%\end{center}
%
%Finally if $\varphi:C\twoheadrightarrow c(C)$ is a linear
%correspondence we have $\varphi(x)= {\varphi_M}(x)$ so
%$\mathfrak{h}_1(M_{\varphi},\{I\})=
%\sup\{\mathfrak{h}(\varphi(x),x):x\in C,\|x\|=1\}.$ [see lemma
%\ref{lemma} ]
%
%We denote $\mathfrak{h}_1$ the Hausdorff metric connected with the
%norm defined by (\ref{norm}) .
\begin{corol} \label{1} Suppose that $C$ has a finite cone basis. If $\varphi:C\rightarrow c(C)$ is a
sublinear correspondence, then
$$\mathfrak{h}_1(M_{\widehat{\varphi}},\{\mathbb{I}\})\geq
\sup\{\mathfrak{h}(\varphi(x),\{x\}):x\in C,\|x\|=1\},$$ where
$\widehat{\varphi}$ is given in (\ref{tild}).
\end{corol}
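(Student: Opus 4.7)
The plan is to reduce the statement to the linear case by applying Lemma \ref{lemma} to the linearization $\widehat{\varphi}$ supplied by (\ref{tild}), and then to recover the original correspondence $\varphi$ from its containment in $\widehat{\varphi}$.

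First I would observe that $\widehat{\varphi}$ is a linear correspondence with compact convex values (each $\overline{co}(\varphi(e_j))$ is compact by Mazur's theorem in the completion $Y_{0}$, and a finite sum of compact sets is compact). Consequently Lemma \ref{lemma} applies to $\widehat{\varphi}$ and yields
$$\mathfrak{h}_1(M_{\widehat{\varphi}},\{\mathbb{I}\}) \;=\; \sup\{\mathfrak{h}(\widehat{\varphi}(x),\{x\}):x\in C,\ \|x\|=1\}.$$
Thus it suffices to establish the pointwise inequality
$$\mathfrak{h}(\varphi(x),\{x\}) \;\leq\; \mathfrak{h}(\widehat{\varphi}(x),\{x\}),\qquad x\in C,\ \|x\|=1,$$
and to take the supremum over the unit sphere of $C$.

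The pointwise inequality is a routine consequence of the inclusion $\varphi(x)\subseteq\widehat{\varphi}(x)$ together with the fact that $\{x\}$ is a singleton. Writing $x=\sum_{j=1}^{n}\lambda_{j}e_{j}$ and using sublinearity of $\varphi$, together with the definition (\ref{tild}), I get
$$\varphi(x)\;\subseteq\;\sum_{j=1}^{n}\lambda_{j}\varphi(e_{j})\;\subseteq\;\sum_{j=1}^{n}\lambda_{j}\overline{co}(\varphi(e_{j}))\;=\;\widehat{\varphi}(x).$$
Since $\{x\}$ is a singleton, for any nonempty compact set $A$ one has $\mathfrak{h}(A,\{x\})=\sup_{a\in A}\|a-x\|$ (the infimum in the definition of $\mathfrak{h}$ is dominated by the supremum). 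Hence $A\mapsto\mathfrak{h}(A,\{x\})$ is monotone under inclusion, and the display above gives $\mathfrak{h}(\varphi(x),\{x\})\le \mathfrak{h}(\widehat{\varphi}(x),\{x\})$, as required.

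There is no serious obstacle; the argument is essentially the monotonicity of Hausdorff distance to a singleton under set inclusion, combined with Lemma \ref{lemma}. The only point one must be mindful of is that $\widehat{\varphi}$ takes values in $cc(Y_{0})$ rather than in $c(C)$, so the application of Lemma \ref{lemma} implicitly uses that its proof depends only on linearity and compactness of values. If one prefers, the equality in Lemma \ref{lemma} need not even be invoked in full: the inequality
$$\mathfrak{h}_{1}(M_{\widehat{\varphi}},\{\mathbb{I}\})\;\geq\;\sup\{\mathfrak{h}(\widehat{\varphi}(x),\{x\}):x\in C,\ \|x\|=1\}$$
(which is the half that is actually needed here, and which is already contained in Lemma 2 of \cite{ol}) suffices to close the argument.
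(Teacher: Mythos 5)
Your proposal is correct and follows the same route as the paper, whose proof is exactly the one-line observation that Lemma \ref{lemma} applied to $\widehat{\varphi}$ combines with the inclusion $\varphi(x)\subseteq\widehat{\varphi}(x)$ and the monotonicity of $\mathfrak{h}(\cdot,\{x\})$ under inclusion. You have merely spelled out the details (and correctly noted that only the ``$\geq$'' half of Lemma \ref{lemma} is needed), so no further comment is required.
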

\begin{proof} The proof is an easy application of Lemma \ref{lemma} and
$\varphi(x)\subseteq\widehat{\varphi}(x)$ for each $x\in C$.
\end{proof}

 In \cite{ol}, it is shown that for a cone $C$ with a finite basis  there exists an $\eta>0$ such that for every
linear correspondence $\varphi:C\rightarrow c(C)$ satisfying
$$\mathfrak{h}_1(M_{\varphi},\{\mathbb{I}\})<\eta,$$ every $A\in M_{\varphi}$ is invertible. If
$co\varphi:C\rightarrow cc(C)$ denotes the correspondence
$x\rightarrow co\varphi(x)$, then we have the following result.

\begin{corol}\label{2} Let $\{e_1,e_2,\cdots,e_n\}$  be a finite basis of $C$.
Then, there exists an $\eta>0$ such that for every sublinear
correspondence $\varphi:C\rightarrow c(C)$ satisfying
$\mathfrak{h}_1(M_{co(\varphi)},\{\mathbb{I}\})<\eta$, each $A\in
M_{\varphi}$ is invertible.
\end{corol}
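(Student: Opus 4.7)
The plan is to reduce the statement to the linear invertibility result of \cite{ol} (quoted in the paragraph just preceding the corollary) via the linearization $\widehat{\varphi}$ from (\ref{tild}). First I would let $\eta>0$ be the constant that this cited result produces for the cone $C$, so that every linear correspondence $\psi:C\to c(C)$ with $\mathfrak{h}_1(M_{\psi},\{\mathbb{I}\})<\eta$ has every matrix in $M_{\psi}$ invertible.

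Given a sublinear $\varphi:C\to c(C)$ satisfying $\mathfrak{h}_1(M_{co(\varphi)},\{\mathbb{I}\})<\eta$, I would introduce $\widehat{\varphi}$ as in (\ref{tild}). Since $C-C$ is finite dimensional, the convex hull of any compact subset is already compact (hence closed), so $\overline{co}(\varphi(e_j))=co(\varphi(e_j))$ for each $j$; in addition, $C$ is closed in $C-C$, which ensures that $\widehat{\varphi}:C\to cc(C)$ is a linear correspondence taking values in $c(C)$ so that the cited result is applicable to it. Evaluated at the basis, these observations give the identification
\[
M_{\widehat{\varphi}}=l(\widehat{\varphi}(e_1))\times\cdots\times l(\widehat{\varphi}(e_n))=l(co(\varphi(e_1)))\times\cdots\times l(co(\varphi(e_n)))=M_{co(\varphi)}.
\]
Consequently $\mathfrak{h}_1(M_{\widehat{\varphi}},\{\mathbb{I}\})<\eta$, and applying the cited linear invertibility result to $\widehat{\varphi}$ yields that every matrix in $M_{\widehat{\varphi}}$ is invertible.

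To finish, I would note that $\varphi(e_j)\subseteq co(\varphi(e_j))$ for each $j$, and hence $M_{\varphi}\subseteq M_{co(\varphi)}=M_{\widehat{\varphi}}$, so that every $A\in M_{\varphi}$ is already invertible. The only step carrying real weight is the identification $M_{\widehat{\varphi}}=M_{co(\varphi)}$, which relies critically on $C-C$ being finite dimensional so that closed convex hulls and convex hulls coincide; once this is in place the corollary follows at once from the linear result together with (\ref{tild}) and the trivial inclusion $\varphi\subseteq co(\varphi)$.
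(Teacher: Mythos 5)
Your proposal is correct and follows essentially the same route as the paper: apply the invertibility result for linear correspondences from \cite{ol} to the linearization $\widehat{\varphi}$ of (\ref{tild}), identify $M_{\widehat{\varphi}}=M_{co(\varphi)}$, and conclude via the inclusion $M_{\varphi}\subseteq M_{co(\varphi)}$. The only difference is that you spell out why $\overline{co}(\varphi(e_j))=co(\varphi(e_j))$ in the finite-dimensional setting, a point the paper leaves implicit.
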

\begin{proof} Consider ${\widehat{\varphi}}$ as given in (\ref{tild}). Since $\widehat{\varphi}$ is linear
 with convex and compact values, by Lemma 3 in
{\cite{ol}}, there exists $\eta>0$ such that for every linear
correspondence $\widehat{\varphi}$ with $
\mathfrak{h}_1(M_{\widehat{\varphi}},\{\mathbb{I}\})<\eta$, then
$A\in {M_{\widehat{\varphi}}}$ is invertible. Since
$M_{co(\varphi)} = M_{\widehat{\varphi}}$ and
$M_{\varphi}\subseteq M_{co(\varphi)}$, every $A\in M_{\varphi}$
is invertible.
\end{proof}

%$\spadesuit$The following results prepares the main tools to gain
%a uniform boundedness type theorem for linear and sublinear
%correspondences.$\spadesuit$

%In the first part of Corollary \ref{subcontin}, by replacing $C-C$
%for $Y$ it can be easily seen that the correspondence
%$\varphi:C\twoheadrightarrow c(C)$ in the second part is
%continuous.***(is this sentence necessary?)***
\section{Iteration Semigroups of Sublinear Correspondences}
%The growth and continuity of an iteration semigroup of linear
%correspondences were considered in \cite{ol}. In our
%considerations we can ask about an iteration semigroup of
%sublinear correspondences. A positive answer to this question is
%given in Theorem \ref{thm}, which generalize Theorem 1 in
%\cite{ol}. Recall that,

In this section we investigate the  continuity of an iteration
semigroup of sublinear correspondences.  Theorem \ref{thm} is, in
fact, a  generalization of  Theorem 1 in \cite{ol}.

Recall that the composition of two correspondences $\varphi:
X\twoheadrightarrow Y$ and $\psi:Y\twoheadrightarrow Z$ is defined
by
$$\psi\circ\varphi(x)=\cup_{y\in\varphi(x)}\psi(y),\,\,\,\,\,\,\,\,\,\,(x\in
X).$$

%Before the beginning of the theorem \ref{thm} we prove the following
%proposition.

%\begin{lem}\label{compose} Let $E=\{e_1,e_2,...,e_n\}$  be a finite basis of $C$. Then, there is
%$M>0$ such that for every compact-valued sublinear correspondences
%$\varphi,\, \psi:C\twoheadrightarrow c(C)$   we have
%$$\mathfrak{h}(\varphi \circ\psi (x),\varphi (x))\leq M\|
%\widetilde{\varphi}\|\,
%\mathfrak{h}(\psi(x),\{x\}),\,\,\,\,\,\,\,\, (x\in C)$$ where
%$\widetilde{\varphi}$ is of the form (\ref{tild}).
%\end{lem}
%\begin{proof} Choose  $z\in \varphi \circ \psi(x)$; there exists $y\in \psi (x)$ such
%that $z\in \varphi(y)$ and  $d(z,\varphi(x))\leq
%\mathfrak{h}(\varphi(y),\varphi(x))$. By Corollary \ref{unisub},
 %$$d(z,\varphi(x))\leq M\|
%\widetilde{\varphi}\|\, \| y-x\|\leq M\| \widetilde{\varphi}\|\,
%\mathfrak{h}(\psi(x),\{x\}),$$ for some $M>0$. Therefore,

%\begin{equation}\label{aval}
%\sup_{z\in \varphi \circ\psi(x)}{d(z,\varphi(x))}\leq M\|
%\widetilde{\varphi}\|\, \mathfrak{h}(\psi(x),\{x\}).
%\end{equation}
 %On other hand for  $z\in \varphi(x)$ and  $y\in \psi(x)$, we get $$d(z,\varphi(\psi(x)))\leq
%d(z,\varphi(y))\leq \mathfrak{h}(\varphi(x),\varphi(y))\leq M\|
%\widetilde{\varphi}\|\, \|x-y\|,$$ and so
%\begin{equation}\label{dovom}
%\sup_{z\in \varphi(x)}d(z,\varphi(\psi(x)))\leq M\|
%\widetilde{\varphi}\|\, \mathfrak{h}(\psi(x),\{x\}).
%\end{equation}
%Now, (\ref{aval}) and (\ref{dovom}) imply the desired inequality.
%\end{proof}

\begin{definition} {\rm \cite{ol}  A family $\{\varphi^t:t\geq0\}$ of
correspondences  $\varphi^t:C\twoheadrightarrow C$ is called an
iteration semigroup if  $\varphi^t\circ \varphi^s=\varphi^{t+s}$
for all $t,s\geq0.$ An iteration semigroup $\{\varphi^t:t\geq0\}$
of correspondences  $\varphi^t:C\rightarrow cc(C)$ is said to be
continuous if for every $x\in C$ the correspondence $t\rightarrow
\varphi^t(x)$ is continuous.}
\end{definition}
%\begin{ex} {\rm The  family of sublinear correspondences $\{\varphi^t:t\geq0\}$ defined by $$\varphi^t
%(x,y,z)=\left\{%
%\begin{array}{ll}
%    \{0\}\times([{e}^{\frac{t}{2}},e^{t}]y)\times\{0\}, & (x,y,z)=(0,y,0) \\
%    \{{e}^{t}(x,y,z)\}, & \hbox{otherwise} \\
%\end{array}%
%\right.$$ is an iteration semigroup of set-valued functions
%$\varphi^t:{\mathbb{R}}^3\rightarrow n({\mathbb{R}}^3).$
%
%}\end{ex}

%In the sequel $\spadesuit\spadesuit\spadesuit\spadesuit I$ stands
%for the unit matrix. The set of all square matrices of order n with
%the functional

% is a normed space .

\begin{lem}{\rm \cite{sm}}\label{linuni}
Let $C$ be convex  with nonempty interior. Then there exists $M>0$
such that for every linear continuous correspondence
$\varphi:C\rightarrow c(Y)$ the inequality
$$\mathfrak{h}(\varphi(x),\varphi(y))\leq M \|\varphi\| \,\|x-y\|,\,\,\,\,\,\,\,\,\,(x,y\in C)$$ holds.
\end{lem}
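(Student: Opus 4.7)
The plan is to exploit the nonempty interior of $C$ to reduce the bound to a computation involving only $\|\varphi\|$ on a fixed reference point and one auxiliary point in its neighborhood. Fix once and for all $x_0$ in the interior of $C$ together with $r>0$ such that the closed ball $\overline{B_r(x_0)}\subseteq C$ (available after possibly shrinking $r$). I will show that $M:=(2\|x_0\|+r)/r$, which depends only on $C$, works for every linear continuous correspondence $\varphi:C\to c(Y)$.

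Given distinct $x,y\in C$, set
$$\alpha:=\frac{\|x-y\|}{r},\qquad u:=x_0+r\,\frac{x-y}{\|x-y\|}.$$
Then $u\in\overline{B_r(x_0)}\subseteq C$, and the elementary identity $x-y=\alpha(u-x_0)$ rearranges to
$$x+\alpha x_0 = y+\alpha u,$$
with both sides belonging to $C$ since $C$ is a convex cone (in particular additively closed). Applying additivity and positive homogeneity of $\varphi$ to each side then yields the set equality
$$\varphi(x)+\alpha\,\varphi(x_0)=\varphi(y)+\alpha\,\varphi(u).$$

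To extract the Hausdorff estimate from this equality, fix any $c_0\in\varphi(x_0)$ and let $a\in\varphi(x)$. The set equality provides $b\in\varphi(y)$ and $d\in\varphi(u)$ with $a+\alpha c_0=b+\alpha d$; hence
$$\|a-b\|=\alpha\|d-c_0\|\le\alpha\bigl(\|\varphi(u)\|+\|\varphi(x_0)\|\bigr)\le\alpha\|\varphi\|\bigl(\|u\|+\|x_0\|\bigr)\le\alpha\|\varphi\|(2\|x_0\|+r),$$
where Lemma \ref{equ} controls $\|\varphi(u)\|$ and $\|\varphi(x_0)\|$ by $\|\varphi\|\|u\|$ and $\|\varphi\|\|x_0\|$ respectively. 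Running the same construction with $x$ and $y$ exchanged (replacing $u$ by $u':=x_0+r(y-x)/\|y-x\|\in C$) gives the symmetric bound on $d(\varphi(y),\varphi(x))$, so
$$\mathfrak{h}(\varphi(x),\varphi(y))\le\alpha\|\varphi\|(2\|x_0\|+r)=M\|\varphi\|\,\|x-y\|,$$
and the case $x=y$ is trivial.

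The only real subtlety is bookkeeping: one must verify that $u$ (and $u'$) actually lies in $C$, which is why the closed ball $\overline{B_r(x_0)}$ is used; and one must check that every application of additivity of $\varphi$ is on a sum of points of $C$, which it is by construction. Beyond this, the estimate is an immediate consequence of Lemma \ref{equ}, and the constant $M$ manifestly depends only on the cone $C$ (through the choice of $x_0$ and $r$), not on $\varphi$.
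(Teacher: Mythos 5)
The paper gives no proof of this lemma; it is imported verbatim from \cite{sm}. So there is nothing internal to compare against, and the only question is whether your argument is sound. It is. The key identity $x+\alpha x_0=y+\alpha u$ with $\alpha=\|x-y\|/r$ and $u=x_0+r(x-y)/\|x-y\|\in\overline{B_r(x_0)}\subseteq C$ is verified correctly, both sides do lie in $C$ because a convex cone is additively closed, and additivity plus positive homogeneity (note $\alpha>0$ precisely because $x\neq y$) give the set equality $\varphi(x)+\alpha\varphi(x_0)=\varphi(y)+\alpha\varphi(u)$. The extraction of the one-sided Hausdorff bound by fixing $c_0\in\varphi(x_0)$ and matching $a+\alpha c_0$ with some $b+\alpha d$ is the right way to use such an equality, the appeal to Lemma \ref{equ} (equivalently, to the definition of $\|\varphi\|$) to bound $\|d\|$ and $\|c_0\|$ is legitimate, and the symmetric case is genuinely symmetric since $\alpha$ is unchanged under swapping $x$ and $y$. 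The resulting constant $M=(2\|x_0\|+r)/r$ depends only on $C$, as required. This is in substance the classical ``translate into an interior ball'' argument for Lipschitz estimates on additive set-valued maps, which is what one finds in Smajdor's source; one cosmetic remark is that your proof never actually uses the continuity hypothesis --- boundedness of $\varphi$ (finiteness of $\|\varphi\|$) is all that is needed, and the inequality is vacuous otherwise.
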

As a direct  result of Lemma \ref{linuni}, we get the following result.\\
If $E=\{e_1,e_2,...,e_n\}$  is a finite basis of $C$, then there
is $M>0$ such that for every compact-valued sublinear
correspondence $\varphi,\, \psi:C\rightarrow c(C),$
$$\mathfrak{h}(\widehat{\varphi}\circ\widehat{\psi}(x),\widehat{\varphi}(x))\leq M\|
\widehat{\varphi}\|\,
\mathfrak{h}(\widehat{\psi}(x),\{x\}),\,\,\,\,\,\,\,\, (x\in C)$$
where $\widehat{\varphi}$ is of the form (\ref{tild}).
\begin{lem}\label{sublinuni} Let $C$ be a cone with finite basis
$\{e_1,e_2,...,e_n\}$. If $\{\varphi^t:C \rightarrow
cc(C)\}_{t\geq0}$ is an iteration semigroup of sublinear
correspondences $\varphi^t$ with $\varphi^0(x)=\{x\}$, then there
exists $M>1$ such that
\begin{equation}\label{tilda}
\mathfrak{h}(\widehat{\varphi^{w+s}}(x),\widehat{\varphi^{w}}(x))\leq
M\,\|
\widehat{\varphi^{w}}\|\,\|\widehat{\varphi^s}-\varphi^0\|\,\|x\|,
\end{equation}
for each $w,s\geq0$ and $x\in C$.
\end{lem}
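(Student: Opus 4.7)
The plan is to reduce everything to basis elements using the Minkowski decomposition $\widehat{\varphi^t}(x)=\sum_j\lambda_j\varphi^t(e_j)$ (valid because each $\varphi^t(e_j)$ is convex and compact, so $\widehat{\varphi^t}(e_j)=\varphi^t(e_j)$), and then to leverage the commutativity of the iteration semigroup, $\varphi^{w+s}=\varphi^w\circ\varphi^s=\varphi^s\circ\varphi^w$, once in each of the two directions of the Hausdorff metric. The subadditivity and positive homogeneity of $\mathfrak{h}$ under Minkowski sums give
\[\mathfrak{h}\bigl(\widehat{\varphi^{w+s}}(x),\widehat{\varphi^w}(x)\bigr)\le\sum_{j=1}^n\lambda_j\,\mathfrak{h}\bigl(\varphi^{w+s}(e_j),\varphi^w(e_j)\bigr),\]
so the whole problem reduces to estimating $\mathfrak{h}(\varphi^{w+s}(e_j),\varphi^w(e_j))$ by a quantity of the form $C\,\|\widehat{\varphi^w}\|\,\|\widehat{\varphi^s}-\varphi^0\|\,\|e_j\|$ for each basis element.

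For the first Hausdorff direction I would use $\varphi^{w+s}(e_j)=\varphi^w(\varphi^s(e_j))$: any $a\in\varphi^w(z)$ with $z\in\varphi^s(e_j)$ lies in $\widehat{\varphi^w}(z)$ by sublinearity, and Lemma~\ref{linuni} applied to the linear continuous correspondence $\widehat{\varphi^w}$ produces a point of $\widehat{\varphi^w}(e_j)=\varphi^w(e_j)$ within distance $M_0\|\widehat{\varphi^w}\|\,\|z-e_j\|$ of $a$, where $M_0$ is the constant from that lemma. Since $\|z-e_j\|\le\mathfrak{h}(\widehat{\varphi^s}(e_j),\{e_j\})\le\|\widehat{\varphi^s}-\varphi^0\|\,\|e_j\|$, this direction is controlled. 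For the reverse direction I would switch to the commuted form $\varphi^{w+s}(e_j)=\varphi^s(\varphi^w(e_j))$: for any $b\in\varphi^w(e_j)$, the entire set $\varphi^s(b)$ lies inside $\varphi^{w+s}(e_j)$, whence
\[d\bigl(b,\varphi^{w+s}(e_j)\bigr)\le\sup_{a\in\varphi^s(b)}\|a-b\|\le\mathfrak{h}\bigl(\widehat{\varphi^s}(b),\{b\}\bigr)\le\|\widehat{\varphi^s}-\varphi^0\|\,\|b\|,\]
and $\|b\|\le\|\widehat{\varphi^w}\|\,\|e_j\|$ because $b\in\varphi^w(e_j)\subseteq\widehat{\varphi^w}(e_j)$.

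Combining the two directions yields $\mathfrak{h}(\varphi^{w+s}(e_j),\varphi^w(e_j))\le M_1\,\|\widehat{\varphi^w}\|\,\|\widehat{\varphi^s}-\varphi^0\|\,\|e_j\|$ with $M_1:=\max(M_0,1)$, and summing against $\lambda_j$ together with the norm equivalence on the finite-dimensional space $C-C$ (which gives $\sum_j\lambda_j\|e_j\|\le K\|x\|$ for some $K>0$) produces the bound with $M:=KM_1+1>1$. I expect the main obstacle to be the reverse Hausdorff direction: without commutativity of the semigroup, $\varphi^w$ being only sublinear (and hence possibly not lower semicontinuous, as the example in Section~2 shows) would make it impossible in general to locate a point of $\varphi^w(\varphi^s(e_j))$ near a prescribed $b\in\varphi^w(e_j)$. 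The inclusion $\varphi^s(b)\subseteq\varphi^s(\varphi^w(e_j))=\varphi^{w+s}(e_j)$ coming from the commuted form is precisely what bypasses this obstruction.
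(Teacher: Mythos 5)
Your proposal is correct and follows essentially the same route as the paper: one Hausdorff direction via the inclusion $\widehat{\varphi^{w+s}}\subseteq\widehat{\varphi^{w}}\circ\widehat{\varphi^{s}}$ together with Lemma~\ref{linuni}, and the other via the reversed composition, which puts $\varphi^s(b)$ inside $\varphi^{w+s}$ and reduces the estimate to $\|\widehat{\varphi^s}-\varphi^0\|$. The only difference is that you first reduce to basis elements by subadditivity of $\mathfrak{h}$ under Minkowski sums (picking up the norm-equivalence constant $K$), whereas the paper works at a general $x$ by decomposing $z=\sum_i\lambda_i z_i$ with $z_i\in\varphi^w(e_i)$; the two are interchangeable.
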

\begin{proof} For every $w\geq0,s\geq0$ and
$z\in\widehat{\varphi^{s+w}}(x)\subseteq\widehat{\varphi^{w}}\circ
\widehat{\varphi^{s}}(x)$, we get
 %there exists $y\in \widetilde{\psi} (x)$ such
%that $z\in \widetilde{\varphi}(y)$ and
$$d(z,\widehat{\varphi^{w}}(x))\leq
\mathfrak{h}(\widehat{\varphi^{w}}\circ\widehat{\varphi^{s}}(x),\widehat{\varphi^{w}}(x)).$$
Therefore by Lemma \ref{linuni},
 $$d(z,\widehat{\varphi^{w}}(x))\leq M\|
\widehat{\varphi^{w}}\|\,\mathfrak{h}(\widehat{\varphi^{s}}(x),\{x\}),$$
and so
\begin{equation}\label{aval}
\sup_{z\in \widehat{\varphi^{w+s}}(x)}
d(z,\widehat{\varphi^{w}}(x))\leq M\|
\widehat{\varphi^{w}}\|\,\|\widehat{\varphi^{s}}-\varphi^0\|\,\|x\|.
\end{equation}
Without loss of generality we can assume that $M>1$. On other hand
for  $z\in \widehat{\varphi^{w}}(x)$,  there exist $z_i\in \varphi^w(e_i)$, $i=1,\dots,n$ such that $z=\sum_{i=1}^n \lambda_i e_i$. Thus
\begin{center}
\begin{tabular}{lll}
$\widehat{\varphi^{w+s}}(x)$&$=$& $\sum_{i=1}^n \lambda_i \varphi^s \varphi^w (e_i)$\\[0.2cm]
&$\supseteq$&$\sum_{i=1}^n \lambda_i \varphi^s (z_i)$\\[0.2cm]
&$\supseteq$&
$\varphi^s(\sum_{i=1}^n \lambda_i z_i),$
\end{tabular}
\end{center}
and consequently
\begin{center}
\begin{tabular}{lll}
$d(z,\widehat{\varphi^{w+s}}(x))$&$\leq$& $d(z,{\varphi^{s}}(z))$\\[0.2cm]
&$\leq$& $\sup_{y\in \widehat{\varphi^{s}}(z)} \|z-y\|$\\[0.2cm]
&$=$& $\|(\widehat{\varphi^{s}} - \varphi^0) (z)\|$\\[0.2cm]
&$\leq$&$\|z\|\,\|\widehat{\varphi^{s}}-\varphi^0\|$\\[0.2cm]
&$\leq$&
$\|\widehat{\varphi^{w}}\|\,\|\widehat{\varphi^{s}}-\varphi^0\|\,\|x\|.$
\end{tabular}
\end{center}
Hence  \begin{equation}\label{dovom} \sup_{z\in
\widehat{\varphi^{w}}(x)}d(z,\widehat{\varphi^{w+s}}(x))\leq M\|
\widehat{\varphi^{w}}\|\,
\|\widehat{\varphi^s}-\varphi^0\|\,\|x\|.
\end{equation}
Now, (\ref{aval}) and (\ref{dovom}) imply (\ref{tilda}).
\end{proof}

\begin{thm}\label{thm} Let $C$ be a cone with   finite basis $\{e_1, \cdots ,
e_n\}$ and let $B$ be a bounded subset of $C.$ If
$\{{\varphi}^t:C\rightarrow cc(C)\}_ {t\geq 0}$ is an iteration
semigroup of sublinear correspondences satisfying the conditions:
\\i) ${\varphi}^0(x)=\{x\},$ for all $x\in C$;\\
ii) $\lim_{t\rightarrow0}\| {\varphi}^t-{\varphi}^0\|=0;$\\
 then, there
exists $\beta_0>0$ and $\gamma>0$ such that  $\|
{\varphi}^t\|\leq \beta_0 e^{\gamma t}$, for each $t\geq0$ and
\begin{equation} \label{hokm} \forall w \geq 0\,
\forall\varepsilon>0 \, \exists \delta>0 \, \forall x\in B \,(\mid
s-w\mid<\delta\Rightarrow
\mathfrak{h}(\widehat{\varphi^{w}}(x),\widehat{{\varphi}^s}(x))<\varepsilon).
\end{equation}
In particular, $\{\widehat{\varphi^{t}}:t\geq0\}$ is continuous
where $\widehat{\varphi^{t}}$ is of the form (\ref{tild}).
\end{thm}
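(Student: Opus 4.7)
My plan is to adapt the classical semigroup argument: local boundedness at $0$ yields exponential growth, and Lemma \ref{sublinuni} combined with hypothesis (ii) yields uniform continuity on the bounded set $B$. The one ingredient requiring care is the passage between $\varphi^{t}$ and its linear majorant $\widehat{\varphi^{t}}$, since Lemma \ref{sublinuni} is stated for the latter whereas hypothesis (ii) controls the former.

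\emph{Step 1: Exponential growth of $\|\varphi^{t}\|$.} The semigroup identity $\varphi^{t+s}=\varphi^{t}\circ\varphi^{s}$ together with the definition of the norm yields submultiplicativity: if $z\in\varphi^{t+s}(x)$ then $z\in\varphi^{t}(y)$ for some $y\in\varphi^{s}(x)$, so $\|z\|\le\|\varphi^{t}\|\,\|\varphi^{s}\|\,\|x\|$, hence $\|\varphi^{t+s}\|\le\|\varphi^{t}\|\,\|\varphi^{s}\|$. By (ii) there is $t_{0}>0$ with $\|\varphi^{t}-\varphi^{0}\|<1$ on $[0,t_{0}]$; since for $y\in\varphi^{t}(x)$ one has $\|y\|\le\|x\|+\mathfrak{h}(\varphi^{t}(x),\{x\})\le(1+\|\varphi^{t}-\varphi^{0}\|)\|x\|$, this gives $\|\varphi^{t}\|<2$ on $[0,t_{0}]$. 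Writing $t=kt_{0}+r$ with $r\in[0,t_{0})$ and $k\in\mathbb{N}_{0}$, submultiplicativity yields $\|\varphi^{t}\|\le 2^{k+1}\le\beta_{0}e^{\gamma t}$ with $\beta_{0}=2$ and $\gamma=(\ln 2)/t_{0}$.

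\emph{Step 2: Uniform continuity on $B$.} Since $\varphi^{t}$ has values in $cc(C)$, the closed convex hulls in (\ref{tild}) are redundant and $\widehat{\varphi^{t}}(e_{i})=\varphi^{t}(e_{i})$; hence for $x=\sum\lambda_{i}e_{i}\in C$,
$$\mathfrak{h}(\widehat{\varphi^{t}}(x),\{x\})\le\sum_{i=1}^{n}\lambda_{i}\,\mathfrak{h}(\varphi^{t}(e_{i}),\{e_{i}\})\le\|\varphi^{t}-\varphi^{0}\|\sum_{i=1}^{n}\lambda_{i}\|e_{i}\|.$$
As $\sum\lambda_{i}\|e_{i}\|$ is equivalent to $\|x\|$ on the finite-dimensional subspace $C-C$, there are constants $K_{1},K_{2}>0$ with $\|\widehat{\varphi^{t}}-\varphi^{0}\|\le K_{1}\|\varphi^{t}-\varphi^{0}\|$ and $\|\widehat{\varphi^{t}}\|\le K_{2}\|\varphi^{t}\|$, so both quantities inherit the conclusions of (ii) and Step~1. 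Now fix $w\ge 0$, $\varepsilon>0$, $K=\sup_{x\in B}\|x\|$, and restrict to $|s-w|<1$. Applying Lemma \ref{sublinuni} with the pair $(\max(s,w),|s-w|)$,
$$\mathfrak{h}(\widehat{\varphi^{w}}(x),\widehat{\varphi^{s}}(x))\le M\,\max\bigl(\|\widehat{\varphi^{w}}\|,\|\widehat{\varphi^{s}}\|\bigr)\,\|\widehat{\varphi^{|s-w|}}-\varphi^{0}\|\,K,$$
and the max is bounded by $K_{2}\beta_{0}e^{\gamma(w+1)}$. Since the middle factor tends to $0$ as $|s-w|\to 0$, a single $\delta>0$ independent of $x\in B$ makes the right-hand side less than $\varepsilon$, which is exactly (\ref{hokm}). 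Specializing $B=\{x\}$ for arbitrary $x\in C$ and invoking Theorem 17.15 of \cite{alip} (single-valued continuity into $(cc(Y_{0}),\mathfrak{h})$ is equivalent to continuity of the correspondence) gives the continuity of $\{\widehat{\varphi^{t}}\}_{t\ge 0}$.

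The main obstacle will be Step~2's comparison between the norms of $\varphi^{t}$ and $\widehat{\varphi^{t}}$: the estimate driving everything (Lemma \ref{sublinuni}) lives on the linear majorant side, while the hypothesis lives on the original correspondence side. The basis expansion, together with the inequality $\mathfrak{h}(\overline{co}\,A,\overline{co}\,B)\le\mathfrak{h}(A,B)$ and the finite-dimensionality of $C-C$, is precisely what bridges these two norms; this is also where the finite-basis assumption is essential.
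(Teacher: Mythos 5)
Your proposal is correct and follows essentially the same route as the paper: transfer hypothesis (ii) and the norm bound from $\varphi^t$ to $\widehat{\varphi^t}$ via the basis expansion and equivalence of norms on the finite-dimensional space $C-C$, obtain exponential growth from submultiplicativity together with local boundedness near $0$, and deduce (\ref{hokm}) from Lemma \ref{sublinuni} and (ii). The only (cosmetic) slip is that Lemma \ref{sublinuni} should be invoked with the pair $(\min(s,w),\,|s-w|)$ rather than $(\max(s,w),\,|s-w|)$; your displayed estimate with the maximum of the two norms is nevertheless a valid upper bound, so nothing breaks.
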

\begin{proof} We assume that $\|e_1\|=\cdots=\|e_n\|=1.$ Let $\{{\varphi}^t:\,t\geq0\}$ be an iteration semigroup satisfying i) and ii).
Consider $\widehat{\varphi^t}$ as in (\ref{tild}) and
$\|\cdot\|_0$ the norm induced by the  basis $\{e_1, \cdots ,
e_n\}$ of $C-C$ with $\|x\|_0= \sum_{i=1}^n \mid \lambda_i\mid$,
where $x=\sum_{i=1}^n \lambda_ie_i$. Since
\begin{center}
\begin{tabular}{lll}
$\|\widehat{\varphi^t}-\varphi^0 \|$&$=$& ${\sup}\{\|\widehat{\varphi^t}(x)-x \|:\,\|x\|=1,x\in C\}$\\[0.2cm]
&$=$&${\sup}\{\|{\sum}_{i=1}^{n} {\lambda}_i {\varphi^t}(e_i)-{\sum}_{i=1}^{n} {\lambda}_i e_i\|:\,\|x\|=1,x={\sum}_{i=1}^{n} {\lambda}_i e_i\} $\\[0.2cm]
&$\leq$&$\sup\{{\sum}_{i=1}^{n}{\lambda}_i
\|{\varphi^t}(e_i)-e_i\|:\,\|x\|=1,x={\sum}_{i=1}^{n} {\lambda}_i e_i\}$\\[0.2cm]
&$\leq$& $k\|{\varphi^t}-\varphi^{0}\|,$
\end{tabular}
\end{center}
for some $k>0$ where $ \|x\|_0\leq k\|x\|$, for all $x\in C-C$.
From (ii), we have ${\lim}_{t\rightarrow
0}\|\widehat{\varphi^t}-\varphi^0 \|=0.$
 Therefore there exist $\alpha>0$ and $\beta>1$
such that $\|\widehat{\varphi^t}\|\leq \beta$, for all
$t\in[0,\alpha].$
 According to the equality $\varphi^{t+s}=\varphi^t \circ\varphi^s$ and Corollary 1
 in \cite{sm} we have  $$\|\varphi^{t+s}\| \leq
\|\varphi^t\|\,\|\varphi^s\|.$$

Putting  $t=r\cdot \alpha+\delta$, where $0\leq\delta<\alpha$ and
$r$ is a nonnegative integer we obtain
\begin{center}
\begin{tabular}{lll}
$\|\widehat{\varphi^t}\|$&$=$& $\sup\{\|{\sum}_{i=1}^n \lambda_i{\varphi}^{r\alpha +\delta}(e_i)\|:x=\sum_{i=1}^n \lambda_i e_i,\|x\|=1\}$\\[0.2cm]
&$\leq$&$\sup\{{\sum}_{i=1}^n \lambda_i \|{\varphi}^{r\alpha  + \delta}(e_i)\|: x=\sum_{i=1}^n \lambda_i e_i,\|x\|=1\}$\\[0.2cm]
&$\leq$&$k\,\|{{\varphi}^{r\alpha +\delta}}\|.$

\end{tabular}
\end{center}
Therefore for $t\geq0$, we have (note $t=r\alpha+\delta$ and so
$r<\frac{t}{\alpha}$)

\begin{center}
\begin{tabular}{lll}
$\|\widehat{\varphi^t}\|$&$\leq$& $k\,{\|\varphi^\alpha\|}^r\,\|\varphi^\delta\|$\\[0.2cm]
&$\leq$&$k\beta^{r+1}$\\[0.2cm]
&$=$& $k \beta\,\beta^r$\\[0.2cm]
&$\leq$& $\beta_0 e^{\gamma t},$
\end{tabular}
\end{center}
where $\gamma:=\frac{1}{\alpha}\ln \beta$ and $\beta_0=k\beta$.

 Now we will show that (\ref{hokm}) can be established by an argument similar to that in the proof of
  Theorem 1 in \cite{ol}. Let $w>0$ and $B$ be a bounded set.
 By Lemma \ref{sublinuni}, there exists $\rho>1$ such
that
 for each $x\in B$ and $s\geq0$
%\begin{center}
%\begin{tabular}{lll}
%$\mathfrak{h}(\widetilde{\varphi^{w+s}}(x),\widetilde{{\varphi}^{w}}(x))$&$\leq$&$\mathfrak{h}
%(\widetilde{{\varphi}^{w}}\widetilde{{\varphi}^s}}(x),\widetilde{{\varphi}^{w}}(x))$\\[0.2cm]
%&$\leq$&$\rho\|\widetilde{{\varphi}^{w}}\|\,\mathfrak{h}
%( \widetilde{{\varphi}^s}(x),\{x\})$\\[0.2cm]
%&$\leq$&$\rho\beta_0 e^{\gamma
%w}\|\widetilde{{\varphi}^s}}-{\varphi}^0\|\,\| x\|$\\[0.2cm]
%&$\leq$&$\rho\beta_0e^{\gamma w}
%\|\widetilde{{\varphi}^s}}-{\varphi}^0\|\,\|B\|.$
%\end{tabular}
%\end{center}
\begin{center}
\begin{tabular}{lll}
$\mathfrak{h}(\widehat{{\varphi}^{s+w }}
(x),\widehat{\varphi^w}(x))$
&$\leq$&$\rho\|\widehat{\varphi^w}\|\, \|\widehat{{\varphi}^s}-\varphi^{0}\|\,\|x\|$\\[0.2cm]
&$\leq$&$\rho \beta_0 e^{\gamma{w}}\, \|\widehat{{\varphi}^s}- \varphi^{0}\|\,\|x\|$\\[0.2cm]
&$\leq$&$  \rho \beta_0 e^{\gamma w}\|\widehat{{\varphi}^s}
-{\varphi}^0\|\,\| B\|.$
\end{tabular}
\end{center}
On the other hand for every $x\in B$ and $w \geq s\geq0,$
\begin{center}
\begin{tabular}{lll}
$\mathfrak{h}(\widehat{{\varphi}^{w}}(x),\widehat{{\varphi}^{{w}-s}}(x))$
&$\leq$&$\rho\|\widehat{{\varphi}^{{w}-s}}\|\,\|\widehat{{\varphi}^s}-\varphi^0\|\,\|x\|$\\[0.2cm]
&$\leq$&$\rho \beta_0 e^{\gamma({w}-s)}\, \|\widehat{{\varphi}^s}-\varphi^0\|\,\|x\|$\\[0.2cm]
&$\leq$&$  \rho \beta_0 e^{\gamma w}\|\widehat{{\varphi}^s}
-{\varphi}^0\|\,\| B\|.$
\end{tabular}
\end{center}
Now, by (ii),  statement  (\ref{hokm}) holds. Finally for every
$x\in C$, putting  $B=\{x\}$, we get $\lim_{s\rightarrow w}
\mathfrak{h}(\widehat{\varphi^s}(x),\widehat{\varphi^w}(x))=0$ and
$\{\widehat{{\varphi}^{t}}:t\geq0\}$ is continuous.
\end{proof}

\begin{ex}\rm{ Let $C$ be a cone
with a finite basis. Then for the iteration semigroup
$\{\varphi^t:t\geq0\}$ of sublinear correspondences
$\varphi^t:C\rightarrow cc(C)$ given by
$$\varphi^t(x)=[e^{\frac{t}{2}},e^t]x\,\,\,\,\,\,\,(x\in C),$$
we have
$$\|\varphi^t\|\leq e^t,$$ and $$\lim_{s\rightarrow w} \mathfrak{h}(\varphi^s(x),\varphi^w(x))=0,$$ that is the given
family and therefore the family of their linear extensions are
continuous. }
\end{ex}
%$\spadesuit$It is natural to ask whether in Theorem \ref{thm},
%being of finite basis for the convex cone $C$ is necessary. Since
%we need more results to answer this question so we answer it in
%the next section.$\spadesuit$
%Note. If iteration semigroup $\{{\varphi}^t;t\geq0\}$ satisfying
%the assumptions of above theorem  with $B=\{x\},$ then it is
%continuous.

%%%%%%%%%%%%%%%%%%%%%%%%%%%%%%%%%%%%%%%%%%%%%%%%%%%%%%%%%%%%%%%%%%%%%%%%%%%%%%%%%%%%%%%%%
 

\begin{thebibliography}{10}

\bibitem{kn} M. Aghajani, K. Nourouzi, On the regular cosine family of linear correspondences, Aequationes Math. 83, (2012), no. 3, 215?-221.

\bibitem{alip} C.D. Aliprantis, K.C. Border, Infinite dimensional analysis. A hitchhiker's guide. Third edition. Springer, Berlin, 2006.

%\bibitem{multbook} R. Cross, Multivalued linear operators. Monographs and Textbooks in Pure and Applied Mathematics, 213. Marcel Dekker, Inc., New York, 1998.


\bibitem{ol} J. Olko, Selection of an iteration semigroup of linear set-valued functions, Aequationes Math. 56,
(1998) 157-168.

\bibitem{rad} H. R$\dot{\mbox{a}}$dstr$\ddot{\mbox{o}}$m , An embedding theorem for spaces of convex sets,
Proc. Amer. Math. Soc. 3 (1952) 165-169.

\bibitem{sm} A. Smajdor, On regular multivalued cosine families, European Conference on Iteration Theory (Muszyna-Ziockie, 1998).
Ann. Math. Sil. No. 13 (1999), 271--280.






%\bibitem{rudin} W. Rudin, Functional Analysis, TMH edition 1974,
%McGraw-Hill, lnc.  New York.




%\bibitem{raja} P. Raja, S.M. Vaezpour, Some extensions of
%Banach's contraction principle in complete cone metric spaces, Fixed
%Point Theory Appl., 2008(2008), Art. ID 768294, 11 pp.
%
%\bibitem{reza} Sh. Rezapour, R. Hamlbarani, Some notes on the
%paper ``Cone metric spaces and fixed point theorems of contractive
%mappings", J. Math. Anal. Appl. 345(2008) 719-724.

\end{thebibliography}
\end{document}